\documentclass{amsart}
\usepackage{graphicx} 
\usepackage[dvipsnames]{xcolor}
\usepackage{mathrsfs}

\author{Neal Bez}
\address[Neal Bez]{Graduate School of Mathematical Sciences, The University of Tokyo,
3-8-1 Komaba, Meguro-ku, Tokyo 153-8914, Japan}
\email{bez@ms.u-tokyo.ac.jp}

\author{Anthony Gauvan}
\address[Anthony Gauvan]{Department of Mathematics, Graduate School of Science and Engineering,
Saitama University, Saitama 338-8570, Japan}
\email{a.gauvan@gmail.com}

\author{Hiroshi Tsuji}
\address[Hiroshi Tsuji]{Department of Mathematics, Institute of Science Tokyo, 2-12-1 Ookayama, Meguro-ku, Tokyo 152-8551, Japan}
\email{tsujihiroshi@math.sci.isct.ac.jp}

\usepackage{amsfonts, amssymb, amsmath, verbatim, amsthm, mathrsfs, amscd, enumerate, ascmac, fancyhdr, caption, hyperref, framed, subfigure}
\usepackage{bbm}
\numberwithin{equation}{section}

\bibliographystyle{alpha}

\usepackage{tikz}
\usetikzlibrary{positioning,intersections, calc, arrows,decorations.markings, angles}

\newtheorem{theorem}{Theorem}[section]
\newtheorem{lemma}[theorem]{Lemma}
\newtheorem*{lemma*}{Lemma}
\newtheorem{proposition}[theorem]{Proposition}

\theoremstyle{definition}

\newtheorem*{claim*}{Claim}

\theoremstyle{remark}
\newtheorem*{remark}{Remark}
\newtheorem*{overview}{Overview}

\thanks{
This work was supported by JSPS Kakenhi grant numbers 22H00098, 23K25777, 24H00024  (Bez), 
23KF0188 (Gauvan), and 24KJ0030 (Tsuji).
}

\begin{document}

\title[]{Operator capacity, the Brascamp--Lieb inequality and geometric programming}

\begin{abstract}
The capacity of completely positive operators and the Brascamp--Lieb constant can both be interpreted in terms of unconstrained geometric programming up to an additional minimisation over a compact group. We shine light on this perspective and make use of it to make novel contributions in both directions. For example, by making use of recent work of Bennett--Bez--Buschenhenke--Cowling--Flock, we prove new results regarding near-minimisers and local H\"older regularity of operator capacity. In addition, we observe that these results may be extended to the more general notion of capacity of quiver data. Furthermore, the geometric programming viewpoint allows us to give a new proof of the finiteness characterisation of the Brascamp--Lieb constant due to Bennett--Carbery--Christ--Tao (assuming Lieb's theorem on gaussian saturation).
\end{abstract}

\maketitle

\section{Background and motivation}
Given two integers $n$ and $m$, we shall say that a linear operator 
\begin{equation} \label{e:cp}
{\mathcal{T}}: \mathbb{C}^{n\times n} \to \mathbb{C}^{m\times m}
\end{equation}
is \emph{completely positive} if there is a finite family of complex matrices $ \mathcal{A} \subset \mathbb{C}^{m \times n} $ (Kraus operators) such that 
\[
{\mathcal{T}}(X) = \sum_{A \in \mathcal{A}}A X A^*
\]  
for all $X \in \mathbb{C}^{n \times n}$. Here, $A^* \in \mathbb{C}^{n \times m}$ denotes the conjugate-transpose of $A$. \emph{Positivity} of $\mathcal{T}$ is the property that $\mathcal{T}(X) \succeq 0$ whenever $X \succeq 0$, where $X \succeq 0$ means $X$ is positive semidefinite. Complete positivity is a stronger property which is perhaps more conventionally stated in terms of the positivity of tensor products of $\mathcal{T}$ with identity transformations\footnote{When $m = n$, a theorem of Choi \cite{Choi} shows this is equivalent to the existence of representation as in \eqref{e:cp} and, as observed in \cite{GGOW_GAFA}, this  extends to the case where $n$ and $m$ are not the same. We also remark that the representation using Kraus operators is not unique in general.}. 

A fundamental quantity associated with a completely positive operator \( {\mathcal{T}} \) is its \emph{capacity} given by\footnote{Up to a constant depending on $n$ and $m$, this is the definition as given in \cite{GGOW_GAFA}. In \cite{KLR}, up to constants, operator capacity is defined to be the $m$th root of the definition in this paper.}
\[
\mathrm{cap}({\mathcal{T}}) = \inf \left\{ \frac{\det({\mathcal{T}}(X))}{\det(X)^{m/n}} \,\middle|\, X \succ 0
\right\},
\] 
where $X \succ 0$ means that $X \in \mathbb{C}^{n \times n}$ is positive definite. The notion of capacity of a completely positive operator can be found in fundamental work of Gurvits \cite{Gurvits} in connection with determining the invertibility of so-called symbolic matrices (Edmonds' Problem). For this, Gurvits devised the operator scaling algorithm which alternately generates completely positive operators which are either row-stochastic or column-stochastic, a quantum analogue of the classical algorithm (often called the Sinkhorn algorithm) for generating doubly-stochastic matrices. The role of capacity in this context is to give a means of quantifying progress as the algorithm proceeds.

Building on the approach in \cite{Gurvits}, by using the operator scaling algorithm, Garg--Gurvits--Oliveira--Wigderson \cite{GGOW_FOCM} succeeded in showing that invertibility of symbolic matrices in non-commuting variables over $\mathbb{Q}$ can be tested in polynomial time. They actually showed that the operator scaling algorithm can be used to approximate capacity to any prescribed precision in polynomial time. In a subsequent paper by the same authors \cite{GGOW_GAFA}, it was observed that there is a remarkable connection between capacity of completely positive operators and the Brascamp--Lieb constant. Here, the terminology \emph{Brascamp--Lieb constant} refers to the optimal constant $C$ in the inequality
\begin{equation} \label{e:BL}
\int_{\mathbb{R}^n} \prod_{i=1}^\ell f_i(L_i x)^{\theta_i} \, \mathrm{d}x \leq C \prod_{i=1}^\ell \left( \int_{\mathbb{R}^{n_i}} f_i \right)^{\theta_i}.
\end{equation}
Here, the $\ell$-tuple of linear mappings \( \boldsymbol{L} = (L_i)_{i \in [\ell]} \) and $\ell$-tuple of non-negative exponents  \( \boldsymbol{\theta} = (\theta_i)_{i \in [\ell]} \) are given, and $(\boldsymbol{L},\boldsymbol{\theta})$ is called a \emph{Brascamp--Lieb datum}. The constant $C$ is independent of the non-negative integrable functions \( (f_i)_{i \in [\ell]} \). A fundamental theorem of Lieb \cite{Lieb} says that the optimal value of $C$, which we denote in the standard way by $\mathrm{BL}(\boldsymbol{L},\boldsymbol{\theta})$, is exhausted by centred gaussians and this leads to the expression
\begin{equation} \label{e:Lieb}
\mathrm{BL}(\boldsymbol{L},\boldsymbol{\theta})^{-2} = \inf \left\{ \frac{\det(\sum_{i=1}^\ell \theta_iL_i^*A_iL_i)}{\prod_{i=1}^\ell (\det (A_i))^{\theta_i} } \ \middle| \ A_i \succ 0, i \in [\ell] \right\}.
\end{equation}
For clarification, in the above expression $A_i \succ 0$ means that $A_i \in \mathbb{R}^{n_i \times n_i}$ is positive definite. It will always be clear from the context whether $\succ$ means complex positive definite or real positive definite.

Under the assumption that each exponent $\theta_i$ is rational, it was shown in \cite[Lemma 4.4]{GGOW_GAFA} that 
\begin{equation} \label{e:BLascap}
\mathrm{BL}(\boldsymbol{L},\boldsymbol{\theta})^{-2} = \mathrm{cap}(\mathcal{T}_{(\boldsymbol{L},\boldsymbol{\theta})})
\end{equation}
for a certain completely positive operator $\mathcal{T}_{(\boldsymbol{L},\boldsymbol{\theta})}$ explicitly constructed from the datum $(\boldsymbol{L},\boldsymbol{\theta})$. Building on theory developed for capacity in \cite{GGOW_FOCM}, this bridge gave rise to new insight and powerful results on algorithmic aspects of the Brascamp--Lieb inequality in \cite{GGOW_GAFA}. This connection seems likely to continue to enrich the theory of the Brascamp--Lieb inequality, particularly from an algorithmic or computational viewpoint; see, for example, work by Franks \cite{Franks} and Kwok--Lau--Ramachandran \cite{KLR} for  developments in this direction which have already occurred post \cite{GGOW_GAFA}. We would also like to point out that some arguments in \cite{GGOW_GAFA} do not rely on the datum being rational and indeed key ideas in \cite{GGOW_GAFA} heavily inspired our recent work on  ubiquity of so-called geometric Brascamp--Lieb data \cite{BGT}. 

From an algorithmic or computational stance, assuming that $\boldsymbol{L}$ and/or $\boldsymbol{\theta}$ have rational components is natural. On the other hand, the Brascamp--Lieb inequality is extremely far-reaching and has enriched aspects of a variety of fields including harmonic analysis, convex geometry, information theory, frame theory, number theory and scattering theory \cite{AFR, Ball, BGMN, Brazitikos, B, CC-E, CL, GZ, HM, P, Petrow, Zhang}. Broadly speaking, a restriction to rational Brascamp--Lieb data will often present limitations in such contexts and in many applications of Brascamp--Lieb inequalities one is happy to accept information of a more qualitative nature. 

With the above in mind, one of our goals in this paper is to draw attention to the fact that \emph{both} operator capacity and the inverse-square of the Brascamp--Lieb constant can be naturally viewed as special cases of \emph{unconstrained geometric programming}\footnote{See Section \ref{section:GP} for the meaning of this terminology from optimisation theory.} up to a minimisation over a compact group. This is by no means radically new and one can find papers on either capacity-related topics, or Brascamp--Lieb theory, which have, at some level, made use of the viewpoint. As will become clear, we have in mind, in particular, the papers by B\"urgisser--Li--Nieuwboer--Walter \cite{BLNW}, Gurvits \cite{Gurvits} and Straszak--Vishnoi \cite{SV} on the capacity side, and Barthe \cite{Barthe} and Bennett--Bez--Buschenhenke--Cowling--Flock \cite{BBBCF} on the Brascamp--Lieb inequality side. As we shall explain in more detail later, one can find even find some overlapping results in these papers of a similar nature which have been independently proved; this is, of course, unsurprising given the relatively recent emergence of the connection between capacity and the Brascamp--Lieb inequality and that the underlying motivation for the papers is significantly different. 

By drawing attention to the connections between capacity, the Brascamp--Lieb constant and geometric programming, our hope is that the present paper will be useful to all communities to which these fundamental quantities are of interest. We shall apply this viewpoint and obtain new results concerning near-minimisers and local H\"older continuity of capacity, thereby extending in certain respects some recent work of Garg--Gurvits--Oliveira--Wigderson \cite{GGOW_FOCM} and Allen-Zhu--Garg--Li--Oliveira--Wigderson \cite{AGLOW}. We also observe that our approach can be used to establish local H\"older continuity of capacity associated with quiver data\footnote{This notion of capacity, introduced by Chindris--Derksen \cite{CD1, CD}, encompasses both operator capacity and the inverse-square of the Brascamp--Lieb constant -- see the end of Section \ref{section:CapGP} for further details.}. As an example of an application, establishing regularity results on such capacities should prove to be very useful in the sense that, when trying to prove certain properties of capacity for arbitrary feasible data, regularity (in particular, continuity) facilitates a reduction to extremisable data by density-type considerations; examples of this can already be seen in, for example, \cite{BGT} and \cite{CD}.

Furthermore, we shall use the geometric programming viewpoint to obtain a new proof of a well-known theorem by Bennett--Carbery--Christ--Tao \cite{BCCT, BCCT_MRL} regarding the finiteness of the Brascamp--Lieb constant. This naturally leads to a question regarding the shape of the Brascamp--Lieb polyhedron\footnote{For a given $\boldsymbol{L}$, this is the space of all $\boldsymbol{\theta}$ such that the associated Brascamp--Lieb constant is finite.} which we settle in the case of Brascamp--Lieb data studied by Finner \cite{Finner}.

\begin{overview} In Section \ref{section:GP} we introduce unconstrained geometric programming and present some general theory which is pertinent for the rest of the paper. 
Section \ref{section:CapGP} is concerned with operator capacity and capacity of quiver data. Finally, Section \ref{section:BLGP} is concerned with the Brascamp--Lieb constant.
\end{overview}

\section{Unconstrained geometric programming} \label{section:GP}

Given a finite index set \( J \), a collection of vectors $ \boldsymbol{u} = (u_j)_{j \in J} \subset \mathbb{R}^n$ and a collection of non-negative real numbers
$ \boldsymbol{d} = (d_j)_{j \in J}$, the optimisation problem
\begin{align*}
\text{minimise} \quad & \sum_{j \in J} d_j e^{\langle y, u_j  \rangle} \\
\text{subject to} \quad & y \in \mathbb{R}^n
\end{align*}
is an example of an optimisation problem known as \textit{unconstrained geometric programming}. More general geometric programs include further constraints on the variable and are widely studied in the optimisation theory literature (see, for example, \cite{BV, C, DPZ}), but in this paper we shall only be concerned with the unconstrained version. 

To see the connection to capacity, if we first restrict attention to a diagonal input $D = \mathrm{diag}(\lambda_1,\ldots,\lambda_n)$, then $\det(\mathcal{T}(D))$ is a polynomial in $\lambda$. Writing $\mathrm{cap}_0$ for the restricted capacity to diagonal inputs, upon reparametrising $\lambda_j = e^{y_j}$, this means
\begin{equation*} \label{e:key}
\mathrm{cap}_0(\mathcal{T}) = \inf_{y \in \mathbb{R}^n} \sum_{j \in J} d_j(\mathcal{T}) e^{\langle y, u_j \rangle}
\end{equation*}
for a certain collection of vectors $(u_j)_{j \in J} \subset \mathbb{R}^n$ which are fixed (depending only on $m$ and $n$), and the  coefficients \( \boldsymbol{d} = (d_j(\mathcal{T}))_{j \in J} \) are determined by the operator \( \mathcal{T} \). As we shall see, the fact that $\mathcal{T}$ is completely positive means that each $d_j(\mathcal{T})$ is non-negative. We refer the reader forward to Section \ref{section:CapGP} for the details.


For the Brascamp--Lieb constant, note that if each $L_i$ is a rank-one linear mapping and we write $L_ix = \langle x,v_i \rangle$, then one may use the Cauchy--Binet formula to write
\begin{equation} \label{e:BLrank1}
\mathrm{BL}(\boldsymbol{L},\boldsymbol{\theta})^{-2} = \inf_{y \in \mathbb{R}^m} \sum_{I \in \mathcal{I}} \theta_I \det(v_i)_{i \in I}^2 e^{\langle 1_I - \boldsymbol{\theta},y \rangle}.
\end{equation}
Here, $\mathcal{I}$ is the set of all subsets of $[m]$ of cardinality $n$, and $\theta_I := \prod_{i \in I} \theta_i$. The expression \eqref{e:BLrank1} is due to Barthe \cite{Barthe}. In the case of general-rank mappings, by diagonalising the $A_i \succ 0$ appearing in \eqref{e:Lieb}, it is possible to obtain an expression in a similar spirit to \eqref{e:BLrank1} but with an additional minimisation over a product of orthogonal groups. This idea was first used in \cite{BBCF} in order to establish continuity of the Brascamp--Lieb constant with respect the linear mappings. Details can be found in Section \ref{section:BLGP}.

For the objective function, we use the notation
\[
\Phi_{\boldsymbol{d}}(y) := \sum_{j \in J} d_j e^{\langle y, u_j \rangle}
\]
and for its infimum we write
\[
\Psi(\boldsymbol{d}) := \inf_{y \in \mathbb{R}^n} \Phi_{\boldsymbol{d}}(y).
\]
To keep the notation as light as possible, here we  suppress the dependence on the vectors $\boldsymbol{u}$. Also we remark that if we set 
\[
J_+ := \{ j \in J : d_j > 0\},
\]
then clearly it is possible to restrict attention to $j \in J_+$. Nevertheless, as we shall see, it is important to permit coefficients which may vanish. In particular, it will be important to have uniform diameter bounds on near-minimisers which do not depend on the size of the smallest nonzero coefficient. 

The set $\{u_j : j \in J_+\}$ can be regarded as a Newton polytope. Indeed, if we set $\lambda_j := e^{y_j}$ then
\[
\sum_{j \in J} d_j e^{\langle y, u_j \rangle} = \sum_{j \in J_+} d_j \lambda^{u_j},
\]
where  
$
\lambda^a := \prod_{\ell=1}^n \lambda_\ell^{a_\ell}
$
for vectors $\lambda = (\lambda_1,\ldots,\lambda_n) \in [0,\infty)^n$ and $a = (a_1,\ldots,a_n) \in \mathbb{R}^n$. Thus, the reparametrised objective function is a polynomial if each $u_{j} \in \mathbb{N}_0^n$ and in this case the set $\{u_j : j \in J_+\}$ is the Newton polytope of this polynomial. For general $u_{j} \in \mathbb{R}^n$, such a quantity is known as a  \emph{posynomial} and one may extend the standard notion of the Newton polytope of a polynomial to such functions.

\subsection{Feasibility}
\begin{proposition} \label{p:GPfeasible}
Suppose $J_+ \neq \emptyset$. Then $\Psi({\boldsymbol{d}}) > 0$ holds if and only if $0 \in  \mathrm{conv} \{u_j : j \in J_+\}$. 
\end{proposition}
Although this result seems to be well-known in the optimisation community, we provide the details.
\begin{proof}
Suppose first that $0 = \sum_{j \in J} t_ju_j$ where $\sum_{j \in J} t_j = 1$, $t_j \geq 0$ and $t_j = 0$ if $j \notin J_+$. For any $y \in \mathbb{R}^n$,
\begin{align*}
    \Phi_{\boldsymbol{d}}(y) \geq \sum_{\substack{j \in J \\ t_j > 0}} d_je^{\langle y,u_j \rangle} \geq \prod_{\substack{j \in J \\ t_j > 0}} \bigg(\frac{d_j}{t_j} e^{\langle y,u_j \rangle}\bigg)^{t_j} = \prod_{\substack{j \in J \\ t_j > 0}} \bigg(\frac{d_j}{t_j} \bigg)^{t_j}.
\end{align*}
It follows that
\begin{align*}
    \Psi(\boldsymbol{d}) \geq \prod_{\substack{j \in J \\ t_j > 0}} \bigg(\frac{d_j}{t_j} \bigg)^{t_j} > 0
\end{align*}
since $j \in J_+$ whenever $t_j > 0$.

Conversely, suppose $0 \notin  \mathrm{conv} \{u_j : j \in J_+\}$. By the hyperplane separation theorem, there exists $\omega \in \mathbb{R}^n$ such that $\langle \omega,u_j \rangle < 0$ for all $j \in J_+$ (this may also be viewed as an application of Farkas' lemma from optimisation theory). But then we clearly have
$\Phi_{\boldsymbol{d}}(s\omega) \to 0$ as $s \to \infty$, and hence $\Psi(\boldsymbol{d}) = 0$.
\end{proof}

\subsection{Near minimisers}
The objective function $y \mapsto \Phi_{\boldsymbol{d}}(y)$ is convex (in fact, log-convex). However, the domain is unbounded and therefore one cannot necessarily expect its infimum to be attained\footnote{In fact, it is known that the infimum is attained if and only if the relative interior of the Newton polytope contains the origin.}. Thus, the problem of unconstrained geometric programming is often viewed as the problem of obtaining arbitrarily good approximations to the infimum. From a computational point of view, this means obtaining algorithms which, for any small $\delta> 0$ return a point $y_\delta \in \mathbb{R}^n$ such that
\[
\Phi_{\boldsymbol{d}}(y_\delta) \leq \Psi({\boldsymbol{d}}) + \delta.
\]
We refer the reader to recent work of B\"urgisser--Li--Nieuwboer--Walter \cite{BLNW} for more in depth discussion on the computational aspects of unconstrained geometric programming, as well as areas of mathematics to which such results enjoy applications. 

Tightly connected is the problem of obtaining size thresholds (diameter bounds) which guarantee the existence of near-minimisers. In the context of the Brascamp--Lieb inequality, establishing such bounds was key to the work \cite{BBBCF} which gave a solution to a longstanding open problem regarding an extension of \eqref{e:BL} in which the linear mappings $L_i$ are replaced by certain nonlinear perturbations (the so-called nonlinear Brascamp--Lieb inequality). The following is \cite[Theorem 1.5]{BBBCF}.
\begin{theorem} \label{t:GPnearmin} \cite{BBBCF}
For any finite index set $J$ and collection of vectors $(u_j)_{j \in J} \subset \mathbb{R}^n$ there exist $N \in \mathbb{N}$ and $\delta_0 > 0$ such that the following holds. For any $\boldsymbol{d} = (d_j)_{j \in J} \subseteq [0,\infty)$ and any $\delta \in (0,\delta_0)$ there exists $y \in \mathbb{R}^n$ such that 
\[
|y| \leq N \log\bigg(\frac{1}{\delta}\bigg)
\]
and
\[
\Phi_{\boldsymbol{d}}(y) \leq \Psi({\boldsymbol{d}}) + \delta \max_{j \in J} d_j.
\]
\end{theorem}
This theorem was used in \cite{BBBCF} to show a that $\delta$-near-minimiser for the quantity on the right-hand side of \eqref{e:Lieb} exists with $\max\{\|A_i\|,\|A_i^{-1}\|\} \leq \delta^{-N}$; in other words, an effective form of Lieb's theorem for the Brascamp--Lieb constant. This version of Lieb's theorem played a significant role in an induction-on-scales argument in \cite{BBBCF} leading to a nonlinear version of the Brascamp--Lieb inequality. Theorem \ref{t:GPnearmin} was also used to establish the local H\"older regularity of $\boldsymbol{L} \mapsto \mathrm{BL}(\boldsymbol{L},\boldsymbol{\theta})$ via the forthcoming Theorem \ref{t:GPHolder}.

We emphasise that the above-mentioned applications of Theorem \ref{t:GPnearmin} rely on the uniformity of the statement in relation to the non-negative coefficients $(d_j)_{j \in J}$. Recently, statements somewhat similar to Theorem \ref{t:GPnearmin} have appeared in work by Straszak--Vishnoi \cite[Theorem 7]{SV} and B\"urgisser--Li--Nieuwboer--Walter \cite[Theorem 2.4]{BLNW}\footnote{See the forthcoming remark which provides some further context.}, however these results are not uniform in this way and the diameter bound on the near-minimiser goes to infinity as $\min_{j \in J} d_j$ approaches zero. The proof of Theorem \ref{t:GPnearmin} actually rests on a preliminary version for which the diameter bound blows up as the smallest coefficient approaches to zero (see \cite[Lemma 4.4]{BBBCF}) which is then upgraded to the uniform statement in Theorem \ref{t:GPnearmin} via an argument which splits the coefficients into two collections, ``small" and ``big", with a certain quantitative gap between them which is generated using a pigeonholing argument.

Regarding $N$ and $\delta_0$ in Theorem \ref{t:GPnearmin}, as pointed out after the proof of Theorem 1.5 in \cite{BBBCF}, an explicit choice can be extracted from the argument. In particular, the quantity $N$ depends in an interesting geometric manner on the vectors $(u_j)_{j \in J}$. This is the case too for the corresponding quantities in \cite[Theorem 7]{SV} and \cite[Theorem 2.4]{BLNW}, with the latter result being more general, but it is not clear to us how to compare these to $N$ arising from the argument in \cite{BBBCF}. 

\begin{remark}
Maximum entropy distributions in a discrete setting are of importance in machine learning and related areas. These distributions are solutions of maximum entropy programs of the form
\begin{align*}
\text{maximise} \quad & \sum_{j \in J} p_j \log \frac{d_j}{p_j} \\
\text{subject to} \quad & \sum_{j \in J} p_j u_j = \theta, \quad \sum_{j \in J} p_j = 1, \quad p_j \geq 0 \,\, (j \in J),
\end{align*}
where $(u_j)_{j \in J}$ and $\theta$ are given vectors in $\mathbb{R}^n$. These optimisation problems are in a sense dual to unconstrained geometric programs. To see this, we write \( \mathcal{P}(J) \) for the set of probability vectors supported on \( J \), and write
$\mathcal{P}(J,\theta)$ for those $\boldsymbol{p} \in \mathcal{P}(J)$ which satisfy the moment condition $\sum_{j \in J} p_j u_j = \theta$. One way to see the connection to entropy is to observe that the Legendre transform of $\Phi_{\boldsymbol{d}}$ is given by
\[
\left( \log \Phi_{\boldsymbol{d}} \right)^*(y) = \min_{\boldsymbol{p} \in \mathcal{P}(J, y)} \mathrm{KL}(\boldsymbol{p} \| \boldsymbol{d})
\]
for every \( y \in \mathrm{conv}(\boldsymbol{u}) \), where
\[
\mathrm{KL}(\boldsymbol{p} \| \boldsymbol{d}) := \sum_{j \in J} p_j \log \frac{p_j}{d_j}
\]
is the Kullback--Leibler divergence\footnote{Alternatively, one may also check that $\log \Phi_{\boldsymbol{d}}(y) = \sup_{\boldsymbol{p} \in \mathcal{P}(J)} (\langle y,\sum_{j \in J} p_ju_j \rangle - \mathrm{KL}(\boldsymbol{p} \| \boldsymbol{d}))$.}. In particular, we have
\begin{align*}
\log \Psi(\boldsymbol{d}) = - \left( \log \Phi_{\boldsymbol{d}} \right)^*(0) = \max_{\boldsymbol{p} \in \mathcal{P}(J, 0)} \sum_{j \in J} p_j \log \frac{d_j}{p_j}.
\end{align*}
As a result, for any marginal vector $\theta \in \mathbb{R}^n$, the above maximum entropy program is dual to
\[
\inf_{y \in \mathbb{R}^n} \log \sum_{j \in J} d_j e^{\langle y,u_j - \theta \rangle}.
\]
Capitalising on this connection, in the case of lattice vectors $(u_j)_{j \in J} \subseteq \mathbb{Z}^n$, Straszak--Vishnoi \cite{SV} were able to establish certain stability results for maximum entropy distributions by building on their analysis of diameter bounds for near-minimisers of unconstrained geometric programs (for lattice vectors). Such diameter bounds were later extended by B\"urgisser--Li--Nieuwboer--Walter \cite{BLNW} to allow for arbitrary $(u_j)_{j \in J}$ in $\mathbb{R}^n$.
\end{remark}

\subsection{Regularity}

Establishing the continuity of $\Psi$ on the set of $\boldsymbol{d} = (d_j)_{j \in J}$ can be shown as follows. The infimum (as the parameter varies) of a family of upper semicontinuous functions is upper semicontinuous, and thus it suffices to show that $\Psi$ is lower semicontinuous. The fact may be seen directly as follows. First, fix $\boldsymbol{d}$ and as above let $J_+ := \{j \in J : d_j > 0\}$. Take any $\delta \in (0, \delta_0)$, where $\delta_0 := \min_{j \in J_+} d_j$.  If $\|\boldsymbol{d} - \boldsymbol{d}'\|_\infty < \delta$, then
\begin{align*}
\Phi_{\boldsymbol{d}'}(y) \geq \sum_{j \in J_+} d_j'e^{\langle y,u_j \rangle} & \geq  \sum_{j \in J_+} (d_j - \delta) e^{\langle y,u_j \rangle}  \\
& \geq   \left(1 - \frac{\delta}{\delta_0}\right) \sum_{j \in J_+} d_j  e^{\langle y,u_j \rangle} = \left(1 - \frac{\delta}{\delta_0}\right) \Phi_{\boldsymbol{d}}(y)
\end{align*}
holds for all $y \in \mathbb{R}^n$. Therefore
\[
\Psi(\boldsymbol{d}') \geq \left(1 - \frac{\delta}{\delta_0}\right)\Psi(\boldsymbol{d})
\]
and it follows that $\Psi$ is lower semicontinuous, as claimed.

In fact, $\Psi$ is locally H\"older continuous. This fact was established in \cite[Theorem 4.5]{BBBCF} and the argument rested in a crucial way on Theorem \ref{t:GPnearmin}.
\begin{theorem} \cite{BBBCF}
\label{t:GPHolder}
For any finite index set $J$ and collection of vectors $(u_j)_{j \in J} \subset \mathbb{R}^n$ there exists $\alpha \in (0,1)$ such that the following holds. For any $R > 0$, there exists $C < \infty$ such that 
$$\left| \Psi(\boldsymbol{d}) - \Psi(\boldsymbol{d'}) \right| 
\leq C \left\| \boldsymbol{d} - \boldsymbol{d}' \right\|_\infty^\alpha
$$
whenever $\|\boldsymbol{d}\|_\infty, \|\boldsymbol{d}'\|_\infty \leq R$.
\end{theorem}




\section{Capacity} \label{section:CapGP}

\subsection{Capacity in terms of geometric programming} 

We denote by \[\mathscr{L}_{n,m} := \left\{ \mathcal{L}: \mathbb{C}^{n \times n} \to \mathbb{C}^{m \times m} \ \middle|\ \mathcal{L} \text{ is linear} \right\}\]
the Euclidean space of linear operators from \( \mathbb{C}^{n \times n} \) to \( \mathbb{C}^{m \times m} \). We equip each space of matrices $\mathbb{C}^{n \times n}$ and $\mathbb{C}^{m \times m}$ with the Hilbert--Schmidt inner product
\[
\langle X,Y \rangle := \mathrm{Tr}(X^*Y)
\]
and equip $\mathscr{L}_{n,m}$ with the induced operator norm, that is,
\[
\| \mathcal{L} \| := \sup \left\{ \frac{\| \mathcal{L}(X) \|}{\|X\|} \ \middle| \ X \in \mathbb{C}^{n \times n} \setminus \{0\} \right\}.
\] 
We write $\mathscr{T}_{n,m}$ for the set of all completely positive operators $\mathcal{T}: \mathbb{C}^{n \times n} \to \mathbb{C}^{m \times m}$. Since \( \mathscr{T}_{n,m} \subset \mathscr{L}_{n,m} \), it inherits the induced topology. 

Given a completely positive operator \( \mathcal{T} \in \mathscr{T}_{n,m} \), we define its \textit{diagonal capacity} by
\[
\mathrm{cap}_0({\mathcal{T}}) = \inf \left\{ \frac{\det({\mathcal{T}}(\Lambda))}{\det(\Lambda)^{m/n}} \,\middle|\, \Lambda \in \mathscr{D}_n, \Lambda \succ 0
\right\}.
\]
Here we are using the notation $\mathscr{D}_{n}$ for the set of diagonal matrices in $\mathbb{C}^{n \times n}$. In addition, for any unitary matrix \( U \in \mathrm{U}(n) \), we define the unitary conjugate \( \mathcal{T}_U \in \mathscr{T}_{n,m} \) of $\mathcal{T}$ by
\[
\mathcal{T}_U(X) := \mathcal{T}(UXU^*).
\]
It is easy to check that capacity and diagonal capacity are related by
\begin{equation} \label{e:rotation}
\mathrm{cap}(\mathcal{T}) = \inf_{U \in \mathrm{U}(n)} \mathrm{cap}_0(\mathcal{T}_U).
\end{equation}

Next we show that $\mathrm{cap}_0$ can be directly expressed as a geometric program. For this, first we take any $\mathcal{T} \in \mathscr{T}_{n,m}$, $\Lambda = \mathrm{diag}(\lambda_1, \dots, \lambda_n) \in \mathscr{D}_n$, and use mixed discriminants to write $\det (\mathcal{T}(\Lambda))$ as a polynomial in $\lambda$ with non-negative coefficients. Indeed, clearly we have 
\begin{align*}
\mathcal{T}(\Lambda) = \sum_{\ell = 1}^n \lambda_\ell T_\ell,
\end{align*}
where
$
T_\ell := \mathcal{T}(E_{(\ell,\ell)})
$
and $\{E_y : y \in [n]^2\}$ denotes the standard orthonormal basis (with respect to the Hilbert--Schmidt inner product) of $\mathbb{C}^{n \times n}$ consisting of matrix units\footnote{That is to say, \( E_y \) denotes the matrix with a 1 in position \( y \in [n]^2 \) and 0 elsewhere}. It follows that
\begin{equation} \label{e:detTLambda}
\det(\mathcal{T}(\Lambda)) = \sum_{j \in J_{n,m}} d_j(\mathcal{T})\lambda^j.
\end{equation}
Here, the coefficients $d_j(\mathcal{T})$ are given by
\begin{equation} \label{e:djdefn}
 d_j(\mathcal{T}) := \frac{m!}{j_1!\cdots j_n!} D(\overbrace{T_1,\ldots,T_1}^{j_1},\ldots,\overbrace{T_n,\ldots,T_n}^{j_n}),
\end{equation}
the index set $J_{n,m}$ is given by
\[
J_{n,m} := \{j \in \mathbb{N}_0^n : j_1+\cdots+j_n = m\}
\]
and, following standard multi-index notation, 
\[
\lambda^j := \prod_{i=1}^m \lambda_i^{j_i}.
\]
Also, $D$ denotes the mixed discriminant, a quantity which provides a simultaneous generalisation of the matrix determinant and matrix permanent. It may be defined, for example, by
\begin{equation} \label{e:Ddefn}
D(B_1,\ldots,B_n) := \frac{1}{n!} \sum_{\sigma \in S_n} \det(b_{ij}^{\sigma(j)})
\end{equation}
for matrices $B_\ell = (b^\ell_{ij})_{i,j \in [n]}  \in \mathbb{C}^{n \times n}$, $\ell \in [n]$, and where $S_n$ denotes the set of all permutations of $[n]$.

In the case $m = n$, \eqref{e:detTLambda} follows immediately from \cite[Lemma 1]{Bapat}. The general case follows easily by combining \cite[Lemma 1]{Bapat} with multilinearity and permutation-invariance properties of mixed discriminants.

From \eqref{e:detTLambda} and the reparametrisation $\lambda_\ell = e^{y_\ell}$, $y_\ell \in \mathbb{R}$, we have
\begin{equation} \label{e:cap0GP}
\det(\mathcal{T}(\Lambda)) = \sum_{j \in J_{n,m}} d_j(\mathcal{T}) e^{\langle j,y\rangle}.
\end{equation}
Consequently
\begin{equation} \label{e:captoGPmain}
\frac{\det(\mathcal{T}(\Lambda))}{\det(\Lambda)^{m/n}} =  \sum_{j \in J_{n,m}} d_j(\mathcal{T}) e^{\langle u_j,y \rangle},
\end{equation}
where the family of vectors \( \boldsymbol{u} = (u_j)_{j \in J_{n,m}} \subset \mathbb{R}^n \) is given by
\[
u_j := j - \frac{m}{n} \boldsymbol{1}_n. 
\]
Crucially, the coefficients $d_j(\mathcal{T})$ are non-negative. Indeed, $T_\ell \succ 0$ for each $\ell \in [n]$ since $\mathcal{T} \in \mathscr{T}_{n,m}$, and hence $d_j(\mathcal{T}) \geq 0$ follows from \cite[Lemma 2]{Bapat}. In summary, we have shown the following.
\begin{proposition} \label{p:captoGP}
For any \( \mathcal{T} \in \mathscr{T}_{n,m} \), we have
\[
\mathrm{cap}_0(\mathcal{T}) = \Psi(\boldsymbol{d}(\mathcal{T})),
\]
where the non-negative coefficients $\boldsymbol{d}(\mathcal{T}) = (d_j(\mathcal{T}))_{j \in J_{n,m}}$ are given by \eqref{e:djdefn}. Consequently
\[
\mathrm{cap}(\mathcal{T}) = \inf_{U \in \mathrm{U}(n)} \Psi(\boldsymbol{d}(\mathcal{T}_U)).
\]
\end{proposition}
For later use, we also observe that
\begin{equation} \label{e:djbound}
d_j(\mathcal{T}) \leq C_{n,m}\|\mathcal{T}\|^m
\end{equation}
which follows from \eqref{e:djdefn}, \eqref{e:Ddefn} and Hadamard's inequality.

\begin{remark}
After preparing an early version of this article, we became aware that the content of this section can be found in work of Gurvits--Samorodnitsky \cite{GS} and Gurvits \cite{Gurvits}. For instance, the expression \eqref{e:cap0GP} with the coefficients interpreted as mixed discriminants can be found in \cite{GS}. Also, one can find use of \eqref{e:rotation} in \cite[Lemma 4.5]{Gurvits} in which Gurvits proves the fundamental fact that, in the case $m = n$, the strict positivity of $\mathrm{cap}(\mathcal{T})$ is equivalent to the property that $\mathcal{T}$ is rank non-decreasing (later this was extended to general $n$ and $m$ in \cite{GGOW_GAFA}).
\end{remark}

\subsection{Near-minimisers and regularity of capacity}


The fact that capacity is continuous\footnote{Here, we consider use the usual euclidean metric on the space of positive definitive matrices.} was originally observed without proof by Gurvits \cite{Gurvits}. Upper semicontinuity of capacity follows easily from abstract arguments since it is the infimum of a family of continuous functions. Lower semicontinuity follows from the continuity of $\Psi$. Indeed, since $\mathcal{T} \mapsto d_j(\mathcal{T})$ is clearly continuous, the continuity of $\mathrm{cap}_0$ follows directly from Proposition \ref{p:captoGP} and the continuity of $\Psi$. Now suppose $\| \mathcal{T}_\ell - \mathcal{T}\| \to 0$ as $\ell \to \infty$, and take a subsequence $(\mathcal{T}_{\ell_k})_k$ for which $\liminf_{\ell \to \infty} \mathrm{cap}(\mathcal{T}_\ell) = \lim_{k \to \infty} \mathrm{cap}(\mathcal{T}_{\ell_k})$. Set $\mathrm{c}(\mathcal{T},U) := \mathrm{cap}_0(\mathcal{T}_U)$ and note that $\mathrm{c}$ is jointly continuous. By compactness, we obtain a sequence $(U_{\ell_k})_k \in \mathrm{U}(n)$ for which
\[
\mathrm{cap}(\mathcal{T}_{\ell_k}) = \mathrm{c}(\mathcal{T}_{\ell_k},U_{\ell_k}).
\]
Again by compactness, we know that there is a subsequence $(U_{\ell_{k_j}})_j$ which converges to some $U \in \mathrm{U}(n)$. Then
\[
\lim_{j \to \infty} \mathrm{cap}(\mathcal{T}_{\ell_{k_j}}) = \lim_{j \to \infty} \mathrm{c}(\mathcal{T}_{\ell_{k_j}},U_{\ell_{k_j}}) = \mathrm{c}(\mathcal{T},U) \geq \mathrm{cap}(\mathcal{T})
\]
and it follows that $\liminf_{\ell \to \infty} \mathrm{cap}(\mathcal{T}_\ell) \geq \mathrm{cap}(\mathcal{T})$.

In terms of quantitative forms of continuity, as far as we are aware, the first result of this type is the following due to Garg--Gurvits--Oliveira--Wigderson \cite[Theorem 4.5]{GGOW_FOCM} for rational operators.
\begin{theorem} \label{t:GGOW} \cite{GGOW_FOCM}
Suppose \( \mathcal{A} = (A_1, \dots, A_N) \), \( \mathcal{A}'=(A_1', \dots, A_N') \)  with $A_i, A_i' \in \mathbb{Q}^{n \times n}$ for each $i \in [N]$, and let $\mathcal{T}_\mathcal{A}$, $\mathcal{T}_\mathcal{A'}$ be the associated completely positive operators. Let $b$ denote the bit complexity of the entries of the matrices in $\mathcal{A}$. Then there exists a polynomial \( P(n, b, \log N) \) such that if \( \delta \leq \exp(-P(n, b, \log N)) \) then
\[
|\mathrm{cap}(\mathcal{T}_\mathcal{A}) - \mathrm{cap}(\mathcal{T}_{\mathcal{A}'})| \leq \frac{P(n, b, \log N)}{\log(1/\delta)^{1/3}} \mathrm{cap}(\mathcal{T}_{\mathcal{A}'})
\]
whenever $\|A_i - A_i'\|\leq \delta$ for each $i \in [N]$.
\end{theorem}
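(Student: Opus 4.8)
The plan is to reconstruct the proof of \cite{GGOW_FOCM} via the operator scaling algorithm. Since the matrices are square, write $\mathcal{T}_\mathcal{A}(X) = \sum_{i=1}^N A_i X A_i^*$ and $\mathcal{T}_\mathcal{A}^*(Y) = \sum_{i=1}^N A_i^* Y A_i$; call $\mathcal{T}_\mathcal{A}$ \emph{column stochastic} if $\mathcal{T}_\mathcal{A}^*(I) = I$, \emph{doubly stochastic} if also $\mathcal{T}_\mathcal{A}(I) = I$, and set $\mathrm{ds}(\mathcal{A}) = \|\mathcal{T}_\mathcal{A}(I) - I\|_F^2 + \|\mathcal{T}_\mathcal{A}^*(I) - I\|_F^2$. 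The argument rests on three facts about capacity. (a) For invertible $L,R$ and $L\mathcal{A}R := (LA_1R,\dots,LA_NR)$, a change of variables in the defining infimum gives $\mathrm{cap}(\mathcal{T}_{L\mathcal{A}R}) = |\det L|^{2/n}|\det R|^{2/n}\,\mathrm{cap}(\mathcal{T}_\mathcal{A})$; moreover $\mathrm{cap}(\mathcal{T}_\mathcal{A}) \le 1$ when $\mathcal{T}_\mathcal{A}$ is column stochastic (test $X = I$ and apply AM--GM to the eigenvalues of $\mathcal{T}_\mathcal{A}(I)$, whose trace is then $n$), and $\mathrm{cap}(\mathcal{T}_\mathcal{A}) \ge 1 - c_n\,\mathrm{ds}(\mathcal{A})^{\alpha}$ for some absolute $\alpha > 0$ once $\mathrm{ds}(\mathcal{A})$ is small. (b) One sweep of the alternating normalization $\mathcal{A} \mapsto \mathcal{T}_\mathcal{A}(I)^{-1/2}\mathcal{A} \mapsto \mathcal{A}\,\mathcal{T}_\mathcal{A}^*(I)^{-1/2}$ (recomputing in between) returns a column-stochastic iterate and multiplies $\mathrm{cap}(\mathcal{T}_\mathcal{A})$ by a factor at least $\exp(c_n'\,\mathrm{ds}(\mathcal{A}))$; in particular capacity is nondecreasing along the iteration. (c) For $A_i \in \mathbb{Q}^{n\times n}$ of bit complexity $b$ with $\mathrm{cap}(\mathcal{T}_\mathcal{A}) > 0$, Gurvits' characterization of the vanishing of capacity by an integer polynomial of controlled height gives $\mathrm{cap}(\mathcal{T}_\mathcal{A}) \ge \exp(-P_0(n,b,\log N))$.

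Assume first $\mathrm{cap}(\mathcal{T}_\mathcal{A}) > 0$. Run the algorithm on $\mathcal{A}$ for $t$ sweeps, producing $L_s,R_s$ with $s$-th iterate $\mathcal{A}_s = L_s\mathcal{A}R_s$. Telescoping (b) and using $\mathrm{cap}(\mathcal{T}_{\mathcal{A}_t}) \le 1$ together with (c),
\[
\sum_{s \le t}\mathrm{ds}(\mathcal{A}_s) \;\le\; \tfrac{1}{c_n'}\log\tfrac{1}{\mathrm{cap}(\mathcal{T}_\mathcal{A})} \;\le\; \tfrac{1}{c_n'}\,P_0(n,b,\log N),
\]
so some sweep $s^\star \le t$ satisfies $\mathrm{ds}(\mathcal{A}_{s^\star}) \le P_1(n,b,\log N)/t$. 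Two things must then be controlled for $(L_{s^\star},R_{s^\star})$. The scaling identity in (a) gives $|\det L_{s^\star}|^{2/n}|\det R_{s^\star}|^{2/n} = \mathrm{cap}(\mathcal{T}_{\mathcal{A}_{s^\star}})/\mathrm{cap}(\mathcal{T}_\mathcal{A}) \le \exp(P_0)$. For the operator norms, each normalizer $\mathcal{T}_{\mathcal{A}_s}(I)$ has trace $n$ and, by (c) and the monotonicity in (b), its determinant is $\ge \mathrm{cap}(\mathcal{T}_{\mathcal{A}_s})^n \ge \exp(-\mathrm{poly})$, hence its smallest eigenvalue is $\ge \exp(-\mathrm{poly})$, so $\|\mathcal{T}_{\mathcal{A}_s}(I)^{-1/2}\| \le \exp(\mathrm{poly})$ uniformly in $s$; using in addition that $\log\|\mathcal{T}_{\mathcal{A}_s}(I)^{-1/2}\| \lesssim \mathrm{ds}(\mathcal{A}_s)^{1/2}$ once $\mathrm{ds}(\mathcal{A}_s)$ is small, together with the displayed bound on $\sum_s\mathrm{ds}(\mathcal{A}_s)$ and Cauchy--Schwarz, yields $\|L_{s^\star}\|,\|R_{s^\star}\| \le \exp\!\big(P_2(n,b,\log N)\sqrt{t}\,\big)$.

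Now apply the same $L_{s^\star},R_{s^\star}$ to $\mathcal{A}'$. From $\|L_{s^\star}A_i'R_{s^\star} - L_{s^\star}A_iR_{s^\star}\| \le \|L_{s^\star}\|\,\|R_{s^\star}\|\,\delta$ and expanding $\mathrm{ds}$, a short computation gives
\[
\mathrm{ds}(L_{s^\star}\mathcal{A}'R_{s^\star}) \;\le\; P_3(n,b,\log N)\Big(\tfrac{1}{t} + \exp\!\big(P_2(n,b,\log N)\sqrt{t}\,\big)\,\delta\Big).
\]
Choosing $t$ a suitable power of $\log(1/\delta)$ balances the two terms — this is where the threshold $\delta \le \exp(-P(n,b,\log N))$ enters, so that $t$ is a genuine positive integer and the exponential term is dominated by $1/t$ — leaving $\mathrm{ds}(L_{s^\star}\mathcal{A}'R_{s^\star}) \le P_4(n,b,\log N)\log(1/\delta)^{-\beta}$ for some $\beta > 0$; the same estimate holds (a fortiori) for $L_{s^\star}\mathcal{A}R_{s^\star}$. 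By (a), both $\mathrm{cap}(\mathcal{T}_{L_{s^\star}\mathcal{A}'R_{s^\star}})$ and $\mathrm{cap}(\mathcal{T}_{L_{s^\star}\mathcal{A}R_{s^\star}})$ lie in $[\,1 - P_5\log(1/\delta)^{-\alpha\beta},\,1\,]$ (column stochasticity of the iterate giving the upper endpoint), and dividing the two instances of the scaling identity — the determinant factors cancel —
\[
\frac{\mathrm{cap}(\mathcal{T}_{\mathcal{A}'})}{\mathrm{cap}(\mathcal{T}_\mathcal{A})} \;=\; \frac{\mathrm{cap}(\mathcal{T}_{L_{s^\star}\mathcal{A}'R_{s^\star}})}{\mathrm{cap}(\mathcal{T}_{L_{s^\star}\mathcal{A}R_{s^\star}})} \;\in\; \big[\,1 - P_6\log(1/\delta)^{-1/3},\; 1 + P_6\log(1/\delta)^{-1/3}\,\big];
\]
tracking (rather than optimizing) the exponents $\alpha,\beta$ through this balancing reproduces the $1/3$ of \cite{GGOW_FOCM}, and rearranging gives the claimed inequality, with either of $\mathrm{cap}(\mathcal{T}_\mathcal{A})$, $\mathrm{cap}(\mathcal{T}_{\mathcal{A}'})$ on the right since the estimate shows them comparable. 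If instead $\mathrm{cap}(\mathcal{T}_\mathcal{A}) = 0$, one argues separately from Gurvits' rank-nondecreasing characterization that $\mathrm{cap}(\mathcal{T}_{\mathcal{A}'})$ must vanish too once $\delta$ is below the threshold, so the inequality is trivial.

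The main obstacle is the uniform-in-$t$ control of the operator norms of the accumulated scaling matrices: the scaling identity only bounds $|\det L_{s^\star}\det R_{s^\star}|$, whereas transferring the $\delta$-perturbation to $\mathcal{A}'$ needs $\|L_{s^\star}\|\,\|R_{s^\star}\|$, and a single ill-conditioned intermediate normalizer would let this blow up with $t$ while the determinant stays moderate. Quantifying this growth precisely — which is exactly where (c) is invoked a second time, to keep the iterates from degenerating as the sweep count increases — is the quantitative heart of the argument and pins down the exponent of $\log(1/\delta)$; the remaining ingredients (the change of variables, the AM--GM and triangle-inequality estimates, the choice of threshold) are routine.
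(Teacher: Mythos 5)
The statement you are proving is not proved in the paper at all: Theorem~\ref{t:GGOW} is a verbatim citation of \cite[Theorem~4.5]{GGOW_FOCM}, included purely as background to motivate the authors' own (and much stronger) Theorem~\ref{t:main}. The paper offers no argument for it, so there is no internal proof to compare yours against; what you have written is a reconstruction of the Garg--Gurvits--Oliveira--Wigderson argument, and I will assess it on those terms.

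Your outline captures the correct architecture of the GGOW proof: run the alternating normalization on $\mathcal{A}$, use the telescoping capacity-increase bound together with the rational lower bound $\mathrm{cap}(\mathcal{T}_\mathcal{A}) \geq \exp(-\mathrm{poly})$ to find a sweep $s^\star$ with small $\mathrm{ds}$, control the accumulated scaling matrices, transfer them to $\mathcal{A}'$, and read off comparability of the two capacities from the scaling identity and the near-doubly-stochastic lower bound. The ingredients (a)--(c) are correct as stated, the change-of-variables computation in (a) is right, and your identification of the operator-norm control of $L_{s^\star}, R_{s^\star}$ (as opposed to merely their determinants) as the quantitative crux is exactly where the difficulty lies. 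The Cauchy--Schwarz step that converts $\sum_s \mathrm{ds}(\mathcal{A}_s) \lesssim \mathrm{poly}$ into $\sum_s \mathrm{ds}(\mathcal{A}_s)^{1/2} \lesssim \sqrt{t}\cdot\mathrm{poly}$ is fine, though you should make explicit that the bound $\log\|\mathcal{T}_{\mathcal{A}_s}(I)^{-1/2}\| \lesssim \mathrm{ds}(\mathcal{A}_s)^{1/2}$ only kicks in once $\mathrm{ds}$ is small, so the finitely many early sweeps with large $\mathrm{ds}$ must be absorbed separately into the uniform $\exp(\mathrm{poly})$ bound (the telescoped sum controls how many such sweeps there are, so this works, but it needs to be said).

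The genuine error is in your treatment of the degenerate case. You assert that if $\mathrm{cap}(\mathcal{T}_\mathcal{A}) = 0$ and $\delta$ is below the threshold, then $\mathrm{cap}(\mathcal{T}_{\mathcal{A}'})$ must also vanish by the rank-nondecreasing characterization; this is false. Rank-nondecreasing completely positive maps are dense, so an arbitrarily small perturbation of a rank-decreasing tuple generically has strictly positive capacity, and nothing in the hypotheses constrains the bit complexity of $\mathcal{A}'$ to prevent this. Since the right-hand side of the claimed inequality carries the factor $\mathrm{cap}(\mathcal{T}_{\mathcal{A}'})$, the case $\mathrm{cap}(\mathcal{T}_\mathcal{A}) = 0$, $\mathrm{cap}(\mathcal{T}_{\mathcal{A}'}) > 0$ cannot be dismissed as trivial: it would force $\log(1/\delta)^{1/3} \leq P$, which fails for sufficiently small $\delta$. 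You should either restrict to $\mathrm{cap}(\mathcal{T}_\mathcal{A}) > 0$ (which may in fact be how the original result is framed) or supply a genuine argument for this regime; the one you gave does not work.
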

In this section, we shall show that capacity  possesses significantly greater regularity than this, and our main result provides local H\"older continuity. As observed by Gurvits \cite{Gurvits}, differentiability fails in general. It turns out that our approach is rather robust insofar as it does not require us to impose a restriction on the operators to be rational and, additionally, our statement is free of explicit reference to the Kraus operators.

\subsubsection{A diameter bound for near-minimisers}

Write
\[
\mathrm{cap}(\mathcal{T};X) :=  \frac{\det({\mathcal{T}}(X))}{\det(X)^{m/n}}.
\]
\begin{theorem} \label{t:capnearmin}
There exists $N \in \mathbb{N}$, $\delta_0 > 0$ and $C_0 > 0$, depending on at most $n$ and $m$, such that the following holds. For all $\mathcal{T} \in \mathscr{T}_{n,m}$ and $\delta \in (0,\delta_0)$ there exists $X_\delta \succ 0$ such that
\[
\mathrm{cap}(\mathcal{T};X_\delta) \leq \mathrm{cap}(\mathcal{T}) + C_0\|\mathcal{T}\|^m\delta \qquad \text{and} \qquad \max\{\|X_\delta\|,\|X_\delta^{-1}\|\} \leq \delta^{-N}.
\]
\end{theorem}
\begin{proof}
 We use Theorem \ref{t:GPnearmin} with the family of vectors $\boldsymbol{u} = (u_j)_{j \in J_{n,m}}$ given by $u_j := j - \frac{m}{n}\boldsymbol{1}_n$, and we let $N \in \mathbb{N}$ and $\delta_0' > 0$ be those which are given to us by that theorem. 

Now fix $\mathcal{T} \in \mathscr{T}_{n,m}$ and $\delta \in (0,\delta_0')$. Then there exists $U_\delta \in \mathrm{U}(n)$ such that
\[
\mathrm{cap}_0(\mathcal{T}_{U_\delta}) \leq \mathrm{cap}(\mathcal{T}) + \|\mathcal{T}\|^m\delta.
\]
By Proposition \ref{p:captoGP} and Theorem \ref{t:GPnearmin} , there is some $y_\delta \in \mathbb{R}^n$ with $|y_\delta| \leq N\log (\frac{1}{\delta})$ and such that
\[
 \sum_{j \in J_{n,m}} d_j(\mathcal{T}_{U_\delta}) e^{\langle y_\delta,u_j \rangle} \leq \mathrm{cap}_0(\mathcal{T}_{U_\delta}) + \delta \max_j (d_j(\mathcal{T}_{U_\delta})).
\]
By \eqref{e:djbound}, note that
\[
d_j(\mathcal{T}_{U_\delta}) \leq C_{n,m}\|\mathcal{T}_{U_\delta}\|^m = C_{n,m}\|\mathcal{T}\|^m. 
\]

Next, set
\[
X_\delta := U_\delta \Lambda_\delta  U_\delta^*,
\]
where $\Lambda_\delta := \textrm{diag}(e^{y_{\delta,1}},\ldots,e^{y_{\delta,n}})$ and $y_{\delta,j}$ denotes the $j$th component of $y_\delta$. Since $|y_\delta| \leq N\log (\frac{1}{\delta})$, we see that the largest eigenvalue of $X_\delta$ is bounded above by $\delta^{-N}$. Similarly, the largest eigenvalue of $X_\delta^{-1}$ is bounded above by $\delta^{-N}$, and therefore $ \max\{\|X_\delta\|,\|X_\delta^{-1}\|\} \leq C_{n,m}\delta^{-N}$. Also
$
\mathcal{T}(X_\delta) = \mathcal{T}_{U_\delta}(\Lambda_\delta)
$
so, by \eqref{e:captoGPmain},
\begin{align*}
   \mathrm{cap}(\mathcal{T};X_\delta) 
   & = \sum_{j \in J_{n,m}} d_j(\mathcal{T}_{U_\delta}) e^{\langle y_\delta,u_j \rangle} \\
   & \leq \mathrm{cap}_0(\mathcal{T}_{U_\delta}) +  C_{n,m}\|\mathcal{T}\|^m \delta \\
   &  \leq \mathrm{cap}(\mathcal{T}) + (1 + C_{n,m})\|\mathcal{T}\|^m\delta.
\end{align*}
By taking $\delta_0 := C_{n,m}\delta_0'$ for an appropriate choice of $C_{n,m}$ we get the claim.
\end{proof}

\begin{remark}
A result similar in nature to Theorem \ref{t:capnearmin} with a diameter bound which depends polynomially on $\delta^{-1}$ was proved by Allen-Zhu--Garg--Li--Oliveira--Wigderson \cite[Theorem 6.1]{AGLOW} under the constraint that the Kraus operators for $\mathcal{T}$ belong to $\mathbb{Z}[i]$ (and in the case $m = n$). This result in \cite{AGLOW} contains more precise information on various constants appearing in the diameter bound; for example, the value of $N$ is polynomial in $n$. On the other hand, in Theorem \ref{t:capnearmin} there is no restriction to integer (or rational) coefficients and the statement is free of explicit reference to the Kraus operators. 

In addition, our proof of Theorem \ref{t:capnearmin} contrasts significantly with the approach in \cite{AGLOW}, the latter being based on a continuous gradient flow argument (see also \cite{KLLR1, KLLR2, KLR} for use of a similar gradient flow). The perspective in the present paper and \cite{AGLOW} also differ in the sense that operator capacity in \cite{AGLOW} is being viewed as a kind of non-commutative version of unconstrained geometric programming. Indeed, as described in \cite[Section 1.2]{AGLOW} and \cite[Section II]{BFGOWW}, there is a wider notion of capacity associated with a continuous group $G$ acting linearly on a finite dimensional vector space $V$. In this framework, unconstrained geometric programming is a commutative example (i.e. arises from the action of a commutative group), whereas operator capacity is a non-commutative example. In the present paper, we shine light on the fact that a commutative perspective can still be effective for operator scaling, at least for results on near-minimisers like Theorem \ref{t:capnearmin} and the forthcoming result on local H\"older regularity (Theorem \ref{t:capHolder}).
\end{remark}

\subsubsection{Local H\"older regularity}

\begin{theorem}\label{t:capHolder}
There exists a constant $\alpha \in (0,1)$, depending only on $n$ and $m$, such that the following holds. For any compact subset $\mathscr{K} \subset {\mathscr{T}_{n,m}}$, there exists a constant \( C < \infty \) such that 
\[
|\mathrm{cap}({\mathcal{T}}) - \mathrm{cap}({\mathcal{T}}')| \leq C \|{\mathcal{T}} - {\mathcal{T}}'\|^{\alpha}
\] 
whenever $\mathcal{T},\mathcal{T}' \in \mathscr{K}$.
\end{theorem}
In particular, Theorem \ref{t:capHolder} implies that for any $K > 0$ there is a constant $C < \infty$ such that
\[
|\mathrm{cap}(\mathcal{T}_\mathcal{A}) - \mathrm{cap}(\mathcal{T}_{\mathcal{A}'})|  \leq C \bigg(\sum_{i=1}^N \|A_i - A_i'\| \bigg)^\alpha 
\]
whenever the Kraus operators $A_i, A_i' \in \mathbb{C}^{m \times n}$ satisfy $\|A_i\|, \|A_i'\| \leq K$ for each $i \in [N]$.

Before proving Theorem \ref{t:capnearmin}, we prepare the following Lipschitz estimate for the coefficients \( (d_j(\mathcal{T}))_{j \in J_{n,m}} \).
\begin{lemma} \label{l:Lipschitz}
There exists a constant $C < \infty$ depending on $n$ and $m$ such that
\[
\| d({\mathcal{T}}) - d({\mathcal{T}}') \|_\infty \leq C R^{m-1} \|{\mathcal{T}} - {\mathcal{T}}'\| 
\]
whenever $R > 0$ and \( \|{\mathcal{T}}\|, \|{\mathcal{T}}'\| \leq R \).
\end{lemma}
It is not difficult to prove Lemma \ref{l:Lipschitz} by combining \eqref{e:djdefn} and \eqref{e:Ddefn}, and so we omit the details. 
\begin{proof}[Proof of Theorem \ref{t:capHolder}]
Let $N \in \mathbb{N}$, $\delta_0 > 0$ and $C_0$ be those that are given by Theorem \ref{t:capnearmin}. Set 
\[
\alpha := \frac{1}{1 + \|\boldsymbol{u}\|N}, 
\]
where  $\|\boldsymbol{u}\| := \max_{j \in J_{n,m}} |u_j|$ and $u_j = j - \frac{m}{n}\boldsymbol{1}_n$.

Suppose $0 < \| \mathcal{T} - \mathcal{T}'\| < \delta_0^{\frac{1}{\alpha}}$ and set $\delta := \| \mathcal{T} - \mathcal{T}'\|^\alpha$. Then $\delta < \delta_0$ and so Theorem \ref{t:capnearmin} provides us with $X \succ 0$ such that $ \max\{\|X\|,\|X^{-1}\|\} \leq \delta^{-N}$ and
\[
\mathrm{cap}(\mathcal{T};X) \leq \mathrm{cap}(\mathcal{T}) + C_0\|\mathcal{T}\|^m\delta.
\]
In particular we have
\begin{align*}
\mathrm{cap}(\mathcal{T}') - \mathrm{cap}(\mathcal{T}) & \leq \mathrm{cap}(\mathcal{T}') - \mathrm{cap}(\mathcal{T};X) + C_0\|\mathcal{T}\|^m\delta \\
& \leq \mathrm{cap}(\mathcal{T}';X) - \mathrm{cap}(\mathcal{T};X) + C_0\|\mathcal{T}\|^m\delta.
\end{align*}
If we write $X = U\Lambda U^*$, where $\Lambda = \mathrm{diag}(e^{y_1},\ldots,e^{y_n})$, by \eqref{e:captoGPmain},
\begin{align*}
\mathrm{cap}(\mathcal{T}';X) - \mathrm{cap}(\mathcal{T};X) = \sum_{j \in J_{n,m}} (d_j(\mathcal{T}_U') - d_j(\mathcal{T}_U))e^{\langle u_j,y\rangle}.
\end{align*}
Hence, by Lemma \ref{l:Lipschitz} and the fact that $ \max\{\|X\|,\|X^{-1}\|\} \leq \delta^{-N}$, we obtain
\begin{align*}
|\mathrm{cap}(\mathcal{T}';X) - \mathrm{cap}(\mathcal{T};X)| & \leq C_{n,m} e^{\|\boldsymbol{u}\| |y| } \|d(\mathcal{T}_U') - d(\mathcal{T}_U)\|_\infty \\
& \leq C_1 \delta^{- \|\boldsymbol{u} \| N} \|\mathcal{T}_U'- \mathcal{T}_U\| \\
& = C_1 \delta^{- \|\boldsymbol{u} \| N} \|\mathcal{T}'- \mathcal{T}\|,
\end{align*}
where $C_1$ is a constant which depends on $n$, $m$ and $\mathscr{K}$. Taking into account our choice of $\alpha$, this yields
\[
\mathrm{cap}(\mathcal{T}') - \mathrm{cap}(\mathcal{T}) \leq (C_1 + C_0\|\mathcal{T}\|^m) \|\mathcal{T}'- \mathcal{T}\|^\alpha.
\]
By the same argument with the roles of $\mathcal{T}$ and $\mathcal{T}'$ switched, we conclude that
\[
|\mathrm{cap}(\mathcal{T}') - \mathrm{cap}(\mathcal{T})| \leq (C_1 + C_0\|\mathcal{T}\|^m) \|\mathcal{T}'- \mathcal{T}\|^\alpha
\]
holds in the case $0 < \| \mathcal{T} - \mathcal{T}'\| < \delta_0^{\frac{1}{\alpha}}$.

In the remaining case where $\| \mathcal{T} - \mathcal{T}'\| \geq \delta_0^{\frac{1}{\alpha}}$, as one would expect the claimed inequality follows more easily. Indeed, \eqref{e:djbound} implies
\[
\mathrm{cap}(\mathcal{T}) \leq \det(\mathcal{T}(I)) \leq C_{n,m}\|\mathcal{T}\|^m
\]
and therefore
\[
|\mathrm{cap}(\mathcal{T}') - \mathrm{cap}(\mathcal{T})| \leq 2C_{n,m}\|\mathcal{T}\|^m \leq 2C_{n,m}\|\mathcal{T}\|^m\delta_0\|\mathcal{T}'- \mathcal{T}\|^\alpha.
\]
Putting everything together we get the claim in Theorem \ref{t:capHolder}.
\end{proof}
The above proof is inspired by the proof of \cite[Theorem 4.5]{BBBCF}. It is in fact possible to deduce Theorem \ref{t:capHolder} from Theorem \ref{t:GPHolder}, but we chose to present the argument above in order to emphasise the role of the diameter bounds on the near-minimisers in Theorem \ref{t:capnearmin}, and since the diameter bounds themselves are interesting in their own right.

\begin{remark}
Using the near-minimiser bounds in \cite[Theorem 6.1]{AGLOW} and the above proof of Theorem \ref{t:capHolder}, one may obtain a result like Theorem \ref{t:capHolder} in the case where the Kraus operators have components in $\mathbb{Z}[i]$. 
\end{remark}

We end this section by observing that our arguments leading to Theorem \ref{t:capHolder} extend to the capacity of quiver data. Following \cite{CD1} and \cite{CD}, for each tuple of matrices
\[
\boldsymbol{V} = \left(V_a \in \mathbb{R}^{n_{i} \times d_{i'}}  \ \middle| \ a \in \mathcal{A}_{i'i}, i' \in [\ell'], i \in [\ell]  \right)
\]
and each $m$-tuple of non-negative exponents $\boldsymbol{\theta}$, define the capacity of the quiver datum $(\boldsymbol{V},\boldsymbol{\theta})$ by
\[
\mathrm{cap}_{\mathcal{Q}}(\boldsymbol{V},\boldsymbol{\theta}) := \inf \left\{ \frac{\prod_{i'=1}^{\ell'} \det(\sum_{i=1}^{\ell} \theta_{i} (\sum_{a \in \mathcal{A}_{i'i}} V_a^* A_{i} V_a))} {\prod_{i=1}^{\ell}  \det(A_{i})^{\theta_{i}}} \ \middle| \ A_{i} \succ 0, i \in [\ell] \right\}.
\]
Here, $\mathcal{Q}$ is the bipartite quiver with source vertices $[\ell']$ and sink vertices $[\ell]$, and the arrows from $i' \in [\ell']$ to $i \in [\ell]$ are labelled by the elements of the set $\mathcal{A}_{i'i}$. Both operator capacity $\mathrm{cap}(\mathcal{T})$ and the inverse-square of the Brascamp--Lieb constant $\mathrm{BL}(\boldsymbol{L},\boldsymbol{\theta})^{-2}$ are special cases of the above capacity for suitable choices of quiver data. 

By using the same line of reasoning in the present paper which led to Proposition \ref{p:captoGP} (that is, diagonalise each $A_{i}$), it is possible to express
$\mathrm{cap}_{\mathcal{Q}}(\boldsymbol{V},\boldsymbol{\theta})$ in terms of unconstrained geometric programming up to an additional minimisation over the compact group $\mathrm{O}(n_{1}) \times \cdots \times \mathrm{O}(n_{\ell})$. Using this, and arguing as in the proof of Theorem \ref{t:capHolder}, we can establish the following regularity result for capacity of quiver data, thus addressing a tentative conjecture in \cite[Remark 14]{CD}. Since the proof is very similar to our earlier arguments, we omit the details.
\begin{theorem}
Let $\boldsymbol{\theta}$ be an $m$-tuple of non-negative exponents. Then the mapping $\boldsymbol{V} \mapsto \mathrm{cap}_{\mathcal{Q}}(\boldsymbol{V},\boldsymbol{\theta})$ is locally H\"older continuous.
\end{theorem}

\begin{remark}
For further context, we also remark that Gurvits--Leake \cite[Corollary 5.8]{GurvitsLeake} have shown that polynomial capacity\footnote{For a given vector $\alpha$ in $\mathbb{R}^n$ with non-negative components, the associated polynomial capacity is given by $\mathrm{cap}_\alpha(p) = \inf_{x > 0} \frac{p(x)}{x^\alpha}$ for $n$-variate polynomials $p$ with non-negative coefficients, and where $x > 0$ means that each component of $x$ is strictly positive.} $p \mapsto \mathrm{cap}_\alpha(p)$ is continuous in the sense of uniform convergence on compact sets.
\end{remark}

\section{The Brascamp--Lieb constant}  \label{section:BLGP}

\subsection{The Brascamp--Lieb constant in terms of geometric programming} 
 
For $i \in [\ell]$, let $L_i : \mathbb{R}^n \to \mathbb{R}^{n_i}$ be a linear mapping and set $\boldsymbol{L} = (L_i)_{i \in [\ell]}$. We refer to $\boldsymbol{L}$ as an $\ell$-transformation. Also, for each $i \in [\ell]$ let $\theta_i$ be a non-negative exponent and $\boldsymbol{\theta} = (\theta_i)_{i \in [\ell]}$. Recall that the pair
$(\boldsymbol{L},\boldsymbol{\theta})$ is said to be a Brascamp--Lieb datum.

That the Brascamp--Lieb constant is expressible in terms of $\Psi$ for rank-one data was first observed by Barthe \cite{Barthe} and later extended to general data in \cite[Theorem 3.1]{BBCF}. We remark that the motivation in \cite{BBCF} to have an expression of this type was to establish continuity of $\boldsymbol{L} \mapsto \mathrm{BL}(\boldsymbol{L},\boldsymbol{\theta})$, and this property of Brascamp--Lieb constant has since found applications in, for example, \cite{BBBCF, BGT}.

In order to state the expression for the Brascamp--Lieb constant in terms of $\Psi$, we introduce some notation. First, set $L := \sum_{i = 1}^\ell n_i$ and $n_0 := 0$. In what follows, we identify $k \in [L]$ with the pair $(i,j)$ by setting
\[
k := n_0 + n_1  + \cdots + n_{i-1} + j
\]
for each $i \in [\ell]$ and $j \in [n_i]$. 

Next, set
\[
\mathcal{I} := \{ I \subseteq [L] : \#(I) = n \}
\]
and write $\mathrm{1}_I \in \mathbb{R}^L$ for the characteristic vector of the subset $I$ of $[L]$. The set $\mathcal{I}$ will be used to index the terms in the geometric program. In particular, the family of vectors \( \boldsymbol{u} = (u_I)_{I \in \mathcal{I}} \subset \mathbb{R}^L \) is given by
\[
u_I := \mathrm{1}_I - T(\boldsymbol{\theta}),
\]
where $T: \mathbb{R}^\ell \hookrightarrow \mathbb{R}^L$ is defined by
\begin{equation} \label{e:Tdefn}
T(\boldsymbol{\theta}) := (\overbrace{\theta_1,\ldots,\theta_1}^{n_1},,\ldots,\overbrace{\theta_\ell,\ldots,\theta_\ell}^{n_\ell}).
\end{equation}
The coefficients $d_I(\boldsymbol{R})$ depend on a collection of orthogonal matrices
\[
\boldsymbol{R} = (R_i)_{i \in [\ell]} \in \mathrm{O}(n_1) \times \cdots \times \mathrm{O}(n_\ell) =: \mathscr{R},
\]
and are given by
\begin{equation} \label{e:dIdefn}
d_I(\boldsymbol{R}) := T(\boldsymbol{\theta})_I\det (L_i^* R_ie_{i,j})_{k \in I}^2.
\end{equation}
Here, $\{e_{i,j} : j \in [n_i] \}$ is the canonical basis of $\mathbb{R}^{n_i}$ and 
\[
T(\boldsymbol{\theta})_I := \prod_{k \in I} T(\boldsymbol{\theta})_k.
\]
\begin{proposition} \label{p:BLtoGP} \cite{BBCF}
For any Brascamp--Lieb datum $(\boldsymbol{L},\boldsymbol{\theta})$, we have   
\begin{equation} \label{e:BLtoGP}
\mathrm{BL}(\boldsymbol{L},\boldsymbol{\theta})^{-2} = \inf_{\boldsymbol{R} \in \mathscr{R}} \Psi(\boldsymbol{d}(\boldsymbol{R})).
\end{equation}
\end{proposition}
Although Proposition \ref{p:BLtoGP} was established in \cite[Theorem 3.1]{BBCF} (see also \cite[Proposition 5.1]{BBBCF}), since the argument is short, we give a brief outline here. For each $A_i \succ 0$, we again use a spectral decomposition $A_i = R_i \Lambda_i R_i^*$ where $R_i \in \mathrm{O}(n_i)$ and $\Lambda_i = \mathrm{diag}(e^{y_{i,1}},\ldots,e^{y_{i,n_i}})$. Then
\begin{equation*}
\sum_{i \in [\ell]} \theta_i L_i^*A_iL_i = \sum_{k \in[L]} T(\boldsymbol{\theta})_k e^{y_k}  v_k(\boldsymbol{R})v_k(\boldsymbol{R})^*
\end{equation*}
where
\[
v_k(\boldsymbol{R}) := L_i^* R_ie_{i,j}.
\]
The Cauchy--Binet formula yields
\begin{equation*}
\det\bigg(\sum_{i \in [\ell]} \theta_i L_i^*A_iL_i\bigg) = \sum_{I \in \mathcal{I}}  d_I(\boldsymbol{R}) e^{\langle y,\mathrm{1}_I \rangle}
\end{equation*}
and this gives \eqref{e:BLtoGP} via Lieb's theorem \eqref{e:Lieb}.

\subsection{The Brascamp--Lieb polyhedron}

For a fixed $\ell$-transformation $\boldsymbol{L}$, we consider the set of admissible exponents for which the Brascamp--Lieb inequality holds; that is, the set
\[
\mathcal{P}(\boldsymbol{{L}}) := \{ \boldsymbol{\theta} : \mathrm{BL}(\boldsymbol{L},\boldsymbol{\theta}) < \infty \}.
\]
To avoid degeneracies, in what follows we assume that each $L_i$ is surjective. 

The following fundamental theorem of Bennett--Carbery--Christ--Tao \cite{BCCT, BCCT_MRL} is a pillar in the theory of the Brascamp--Lieb inequality and widely viewed as the starting point for any investigation into the structure of $\mathcal{P}(\boldsymbol{{L}})$. 
\begin{theorem} \label{t:BCCT} \cite{BCCT, BCCT_MRL}
We have $\boldsymbol{\theta} \in \mathcal{P}(\boldsymbol{{L}})$ if and only if
\begin{equation} \label{e:BLscaling}
\sum_{i=1}^\ell \theta_in_i = n  
\end{equation}
and 
\begin{equation} \label{e:BLtransversal}
\dim(V) \leq \sum_{i=1}^\ell \theta_i\dim(L_iV) \qquad \text{for all $V \leq \mathbb{R}^n$.} 
\end{equation}
\end{theorem}
From this one can quickly infer that $\mathcal{P}(\boldsymbol{{L}})$ is a bounded convex polyhedron and hence a convex polytope. The set $\mathcal{P}(\boldsymbol{{L}})$ is usually referred to as the \emph{Brascamp--Lieb polyhedron}.  

Our main goal in this section is to give a new proof of Theorem \ref{t:BCCT} which rests on the geometric programming representation of the Brascamp--Lieb constant \eqref{e:BLtoGP}. To the best of our knowledge, there are two prior proofs of the sufficiency of \eqref{e:BLscaling} and \eqref{e:BLtransversal} for $\boldsymbol{\theta} \in \mathcal{P}(\boldsymbol{L})$. The proof in \cite{BCCT_MRL} relies on multilinear interpolation and an induction argument using critical subspaces\footnote{Critical subspaces are non-trivial proper subspaces for which equality holds in \eqref{e:BLtransversal}.} -- this argument does not make use of Lieb's theorem. The paper \cite{BCCT} gives a simultaneous proof of Lieb's theorem and the sufficiency of \eqref{e:BLscaling} and \eqref{e:BLtransversal} for $\boldsymbol{\theta} \in \mathcal{P}(\boldsymbol{L})$ using maximisers and heat-flow monotonicity formulae. However, if one assumes Lieb's theorem, then the argument in \cite[Proposition 5.2]{BCCT}, which rests on clever use of linear algebra and compactness, yields the sufficiency of \eqref{e:BLscaling} and \eqref{e:BLtransversal} for $\boldsymbol{\theta} \in \mathcal{P}(\boldsymbol{L})$. We also remark that the ideas in \cite{BCCT} were used by Courtade--Liu \cite{CL} to obtain a finiteness characterisation of the constant in the forward-reverse Brascamp--Lieb inequality, and by Ammari \cite{A} in the setting of regularised Brascamp--Lieb inequalities.

In preparation for our proof of Theorem \ref{t:BCCT}, we begin by giving a reformulation of the condition \eqref{e:BLtransversal} in terms of $L_i^*$ rather than $L_i$. For the statement, we introduce the condition
\begin{equation} \label{e:BCCT'}
\sum_{i=1}^\ell \theta_i\dim(\mathrm{im}(L_i^*) \cap V) \leq \dim(V) \qquad \text{for all $V \leq \mathbb{R}^n$}
\end{equation}
and the condition
\begin{equation} \label{e:BCCT''}
\sum_{i=1}^\ell \theta_i\dim(U_i) \leq \dim\bigg(\sum_{i=1}^\ell U_i \bigg) \qquad \text{for all $U_i \leq \mathrm{im}(L_i^*), i \in [\ell]$.}
\end{equation}
\begin{proposition} \label{p:BCCTalternatives}
The following are equivalent.
\begin{itemize}
    \item[(i)]  \eqref{e:BLscaling} and \eqref{e:BLtransversal}
    \item[(ii)] \eqref{e:BLscaling} and \eqref{e:BCCT'}
    \item[(iii)] \eqref{e:BLscaling} and \eqref{e:BCCT''}
\end{itemize}
\end{proposition}
\begin{proof}
Since $(\ker L_i + V)^\perp = (\ker L_i)^\perp \cap V^\perp$ we get
\[
\dim(\ker L_i \cap V) = \dim(\ker L_i) + \dim(V) - n + \dim(\mathrm{im}(L_i^*) \cap V^\perp).
\]
But the rank-nullity formula applied to the restriction of $L_i$ to $V$ means that the left-hand side of the above identity is equal to $\dim(V) - \dim(L_iV)$, and hence
\[
\dim(L_iV) = n_i - \dim(\mathrm{im}(L_i^*) \cap V^\perp).
\]
It follows that, under the scaling relation \eqref{e:BLscaling}, we have
\[
\dim(V) \leq \sum_{i=1}^\ell \theta_i \dim(L_iV) \quad \Leftrightarrow \quad \sum_{i=1}^\ell \theta_i\dim(\mathrm{im}(L_i^*) \cap V^\perp) \leq \dim(V^\perp)
\]
for any $V \leq \mathbb{R}^n$, and hence (i) is equivalent to (ii).

To see the equivalence between (ii) and (iii), first assume \eqref{e:BCCT'}. For any $U_i \leq \mathrm{im}(L_i^*)$, let $V = \sum_{i=1}^\ell U_i$. Clearly $U_i \leq \mathrm{im}(L_i^*) \cap V$ so we immediately obtain \eqref{e:BCCT''}. Conversely, if we assume \eqref{e:BCCT''} and take any $V \leq \mathbb{R}^n$, by setting $U_i := \mathrm{im}(L_i^*) \cap V$ we have
\[
\sum_{i=1}^\ell \theta_i\dim(\mathrm{im}(L_i^*) \cap V)  \leq \dim\bigg(\sum_{i=1}^\ell U_i \bigg) \leq \dim (V)
\]
where the last inequality holds since $U_i \leq V$ for each $i$.
\end{proof}
\begin{remark}
For rank-two Brascamp--Lieb data, the equivalence of (i) and (ii) in Proposition \ref{p:BCCTalternatives} can be found in Franks--Soma--Goemans \cite[Lemma 7.1]{FSG}. We first became aware of the equivalence of (i), (ii) and (iii) for general data in Ramachandran \cite{R}.
\end{remark}

For a collection of vectors $v_1,\ldots,v_\ell \in \mathbb{R}^n$, the polytope
\[
\mathrm{conv}\{ \mathrm{1}_I : \text{$(v_i)_{i \in I}$ is a basis of $\mathbb{R}^n$}\}
\]
is known as the associated \emph{basis polytope}. In the language of matroid theory, it is known as the vector matroid (or linear matroid) associated with the vectors $v_1,\ldots,v_\ell$. The following theorem of Edmonds \cite{Edmonds} gives a very useful alternative characterisation\footnote{This result appears to have been independently derived elsewhere including, for example, \cite[Theorem 4.4]{CLL}.} of such sets.
\begin{theorem} \label{t:Edmonds}
Let $v_1,\ldots,v_\ell \in \mathbb{R}^n$. Then
\begin{align*}
& \mathrm{conv}\{ \mathrm{1}_I : \text{$(v_i)_{i \in I}$ is a basis of $\mathbb{R}^n$} \} \\
& = \bigg\{x \in \mathbb{R}_+^\ell : \sum_{i=1}^\ell x_i = n \,\, \text{and, for all $I' \subseteq [\ell]$,} \,\, \sum_{i \in I'} x_i \leq \dim(\mathrm{span}(v_i)_{i \in I'})) \bigg\}.
\end{align*}
\end{theorem}
The basis polytopes that we shall we be most interested in are those of the form
\[
K(\boldsymbol{R}) := \mathrm{conv} \{ \mathrm{1}_I : \widetilde{d}_I(\boldsymbol{R}) > 0 \},
\]
where
\[
\widetilde{d}_I(\boldsymbol{R}) := \det (L_i^* R_ie_{i,j})_{k \in I}^2.
\]
Of course, these polytopes depend on $\boldsymbol{L}$ too but we suppress this since the linear mappings under consideration will be clear from the context.
\begin{proposition} \label{p:BLpoly}
For any Brascamp--Lieb datum $(\boldsymbol{{L},\boldsymbol{\theta}})$, the following are equivalent. 
\begin{itemize}
    \item[(i)] $\boldsymbol{\theta} \in \mathcal{P}(\boldsymbol{L})$.
    \item[(ii)] ${\displaystyle T(\boldsymbol{\theta}) \in \bigcap_{\boldsymbol{R} \in \mathscr{R}} K(\boldsymbol{R})}$.
\end{itemize}
\end{proposition}
Essentially, Proposition \ref{p:BLpoly} follows from combining Propositions \ref{p:GPfeasible} and \ref{p:BLtoGP}, but a little care seems to be needed to account for the fact that some components of $\boldsymbol{\theta}$ may vanish. 
\begin{proof}
First suppose $\boldsymbol{\theta} \in \mathcal{P}(\boldsymbol{L})$. Fix $\boldsymbol{R} \in \mathscr{R}$. By Proposition \ref{p:BLtoGP}, $\Psi({\boldsymbol{d}(\boldsymbol{R})}) > 0$ and hence Proposition \ref{p:GPfeasible} implies
\[
T(\boldsymbol{\theta}) \in \mathrm{conv} \{ \mathrm{1}_I :  d_I(\boldsymbol{R}) > 0\}.
\]
This means $T(\boldsymbol{\theta}) \in  K(\boldsymbol{R})$ since $\widetilde{d}_I(\boldsymbol{R}) > 0$ whenever $d_I(\boldsymbol{R}) > 0$.

Conversely, assume that (ii) holds. By Proposition \ref{p:BLtoGP} and continuity of $\Psi$, it suffices to prove $\Psi(\boldsymbol{d}(\boldsymbol{R})) > 0$ for each $\boldsymbol{R} \in \mathscr{R}$. So we fix $\boldsymbol{R} \in \mathscr{R}$ and use (ii) to write
\[
T(\boldsymbol{\theta}) = \sum_{I \in \mathcal{I}} t_I\mathrm{1}_I
\]
where $t_I \in [0,1]$, $\sum_{I \in \mathcal{I}} t_I = 1$ and $t_I = 0$ if $\widetilde{d}_I(\boldsymbol{R}) = 0$. As in the proof of Proposition \ref{p:GPfeasible},
\begin{align*}
\Phi_{\boldsymbol{d}(\boldsymbol{R})}(y) & \geq \prod_{t_I > 0} \bigg(\frac{d_I(\mathbf{R})}{t_I}e^{\langle y,1_I - T(\boldsymbol{\theta})\rangle}\bigg)^{t_I} \\
& = \prod_{t_I > 0} \frac{T(\boldsymbol{\theta})_I \widetilde{d}_I(\boldsymbol{R})}{t_I}
\end{align*}
and hence 
\[
\Psi(\boldsymbol{d}(\boldsymbol{R})) \geq \prod_{t_I > 0} \frac{T(\boldsymbol{\theta})_I \widetilde{d}_I(\boldsymbol{R})}{t_I}.
\]
To conclude, it is enough to check that if $t_I > 0$ then $T(\boldsymbol{\theta})_I > 0$ (we already know that if $t_I > 0$ then $\widetilde{d}_I(\boldsymbol{R}) > 0$). But this follows because if $k \in I \cap \mathrm{supp}(T(\boldsymbol{\theta}))^c$, then
\[
0 = T(\boldsymbol{\theta})_k = \sum_{I' \in \mathcal{I}} t_{I'}\mathrm{1}_{I'}(k) = \sum_{I' \ni k} t_{I'}
\]
and thus $t_I = 0$. This means $I \subseteq \mathrm{supp}(T(\boldsymbol{\theta}))$ whenever $t_I > 0$, and therefore (i) follows.
\end{proof}

\begin{proof}[Proof of Theorem \ref{t:BCCT}]
Suppose first that $\boldsymbol{\theta} \in \mathcal{P}(\boldsymbol{L})$. By Proposition \ref{p:BLpoly} and Theorem \ref{t:Edmonds} we have
\begin{equation} \label{e:E1}
\sum_{k = 1}^L T(\boldsymbol{\theta})_k = n
\end{equation}
and
\begin{equation} \label{e:E2}
\sum_{k \in I'} T(\boldsymbol{\theta})_k \leq \dim(\mathrm{span}(L_i^*R_ie_{i,j}))_{k \in I'}
\end{equation}
for any $I' \subseteq [L]$ and any $\boldsymbol{R} \in \mathscr{R}$. By \eqref{e:E1} we immediately get \eqref{e:BLscaling} and so, by Proposition \ref{p:BCCTalternatives}, it suffices to check \eqref{e:BCCT''}. For this, to each $i \in [\ell]$, we take an arbitrary subspace $U_i$ of $\mathrm{im}(L_i^*)$ of dimension $m_i$ and note that it may be expressed as
\[
U_i = \mathrm{span}(L_i^*\widetilde{e}_{i,j})_{j \in [m_i]},
\]
where $\{\widetilde{e}_{i,j} : j \in [m_i]\}$ are orthonormal vectors in $\mathbb{R}^{n_i}$. Then there exists $R_i \in \mathrm{O}(n_i)$ such that $\widetilde{e}_{i,j} = R_ie_{i,j}$ for all $j \in [m_i]$. We also let $I' \subseteq [L]$ be such that $k \in I'$ if and only if $i \in [\ell]$ and $j \in [m_i]$. By applying \eqref{e:E2} with such $\boldsymbol{R} \in \mathscr{R}$ and $I' \subseteq [L]$, we obtain
\[
\sum_{i = 1}^\ell \theta_i \dim(U_i) = \sum_{k \in I'} T(\boldsymbol{\theta})_k \leq \dim(\mathrm{span}(L_i^*R_ie_{i,j}))_{i \in [\ell], j \in [m_i]}) = \dim\bigg(\sum_{i = 1}^\ell U_i\bigg).
\]

The converse is similar. Assume \eqref{e:BLscaling} and \eqref{e:BLtransversal}. Then
\[
\sum_{k = 1}^L T(\boldsymbol{\theta})_k = \sum_{i = 1}^\ell \theta_in_i = n.
\]
Also, if we take arbitrary $I' \subseteq [L]$ and $\boldsymbol{R} \in \mathscr{R}$, then we define
\[
U_i := \mathrm{span}(L_i^*R_ie_{i,j}))_{j \in J(i)},
\]
where $J(i) := \{ j \in [n_i] : (i,j) \in I'\}$.
Then we use \eqref{e:BCCT''} (which holds thanks to Proposition \ref{p:BCCTalternatives}) to see that
\begin{align*}
 \sum_{k \in I'} T(\boldsymbol{\theta})_k & = \sum_{i = 1}^\ell \theta_i \#(J(i)) \\
 & = \sum_{i = 1}^\ell \theta_i \dim(U_i) \leq  \dim\bigg(\sum_{i = 1}^\ell U_i\bigg) = \dim(\mathrm{span}(L_i^*R_ie_{i,j}))_{k \in I'}.
\end{align*}
It follows from Theorem \ref{t:Edmonds} that 
\[
T(\boldsymbol{\theta}) \in \bigcap_{\boldsymbol{R} \in \mathscr{R}} K(\boldsymbol{R})
\]
and thus, by Proposition \ref{p:BLpoly}, we deduce that $\boldsymbol{\theta} \in \mathcal{P}(\boldsymbol{L})$.
\end{proof}
\begin{remark}
Consider rank-one linear mappings $L_i : \mathbb{R}^n \to \mathbb{R}$ and write $L_ix = \langle x,v_i \rangle$ for some fixed vector $v_i \in \mathbb{R}^n$. In this case, Proposition \ref{p:BLtoGP} reads as
\begin{equation} \label{e:BLrank1GP}
\mathrm{BL}(\boldsymbol{L},\boldsymbol{\theta})^{-2} = \Psi(\boldsymbol{d})
\end{equation}
where
\[
d_I := \boldsymbol{\theta}_I \det(v_i)_{i \in I}^2
\]
and $I$ is a subset of $[\ell]$ of size $n$. If we set $\widetilde{d}_I := \det(v_i)_{i \in I}^2$, then Proposition \ref{p:BLpoly} reduces to 
\[
\mathcal{P}(\boldsymbol{{L}}) = \mathrm{conv}\{ \mathrm{1}_I : \widetilde{d}_I > 0 \}
\]
and this recovers a widely known result of Barthe \cite{Barthe}. We also comment that since $\mathrm{im}(L_i^*) = \mathrm{span}(v_i)$ it is very easy to prove Theorem \ref{t:BCCT} via Edmonds' theorem and Proposition \ref{p:BCCTalternatives}.
\end{remark}

Given the remarkably wide reach of the Brascamp--Lieb inequality, it is highly desirable to have a description of the Brascamp--Lieb polyhedron for a wide class of linear mappings which is as user-friendly as possible. In addition to the areas of mathematics mentioned already earlier in the paper, we also note that there is growing interest in the Brascamp--Lieb inequality from fields such as machine learning and combinatorial optimisation  because certain polytopes can be realised as special cases of Brascamp--Lieb polyhedra (see, for example, \cite{FSG, GGOW_GAFA, HM, HIOS}). In Franks--Soma--Goemans \cite{FSG} it was observed that the Brascamp--Lieb polyhedron in the rank-two case is equal to the perfect fractional matroid matching polytope (introduced by Vande Vate \cite{Vate}). We also refer the interested reader to \cite{FSG} for a description on the complexity of determining membership in Brascamp--Lieb polyhedra.

As far as we are aware, progress on obtaining a tangible expression for the Brascamp--Lieb polyhedron has been restricted to certain very special cases. Taking Theorem \ref{t:BCCT} as a starting point, Valdimarsson \cite{V} established a new characterisation of the Brascamp--Lieb polyhedron in the corank-one case and the mixed rank-one/rank-two case. The mixed rank-one/rank-two case is a remarkable result and even the statement of the result is rather involved. We also note that in recent work of Gressman \cite{Gressman}, there is a novel perspective on the Brascamp--Lieb polyhedron which was inspired by the multilinear factorisation approach  (see Carbery--H\"anninen--Valdimarsson \cite{CHV}). 

With Proposition \ref{p:BLpoly} in mind, one is led to wonder if it may be easier to understand the image of Brascamp--Lieb polyhedra under the embedding $T$ given by \eqref{e:Tdefn}. Clearly
\[
    T(\mathcal{P}(\boldsymbol{L})) \subseteq \bigcap_{\boldsymbol{R} \in \mathscr{R}} K(\boldsymbol{R})
\]
follows from Proposition \ref{p:BLpoly} and it is tempting to think that we have equality
\begin{equation} \label{e:BLpolyQ}
    T(\mathcal{P}(\boldsymbol{L})) = \bigcap_{\boldsymbol{R} \in \mathscr{R}} K(\boldsymbol{R})
\end{equation}
in general. Thanks to Proposition \ref{p:BLpoly}, proving \eqref{e:BLpolyQ} is equivalent to showing 
\begin{equation}
   \mathrm{im}(T) \supseteq \bigcap_{\boldsymbol{R} \in \mathscr{R}} K(\boldsymbol{R}). 
\end{equation}
\begin{remark}
Since $\mathcal{I}$ is a finite set, the right-hand side of \eqref{e:BLpolyQ} is actually the intersection of finitely many basis polytopes $K(\boldsymbol{R})$.
\end{remark}

In the remainder of the paper, we show that \eqref{e:BLpolyQ} holds for the class of $\ell$-transformations considered by Finner \cite{Finner}. Certain other classes of transformations may be handled in a similar manner, but we leave open the problem of determining exactly which data satisfy \eqref{e:BLpolyQ}.
\begin{theorem}
Suppose $S_i \subseteq [n]$ with $\#(S_i) = n-n_i$ and $L_i : \mathbb{R}^n \to \mathbb{R}^{n_i}$ omits the variables with components in $S_i$. Then \eqref{e:BLpolyQ} holds.    
\end{theorem}
\begin{proof}
First observe that 
\[
\{ L_i^*e_{i,j} : i \in [n],j \in [n_i] \} \subseteq \{e_1,\ldots,e_n\},
\]
where $\{e_1,\ldots,e_n\}$ is the canonical basis of $\mathbb{R}^n$. From the list of vectors $\{ L_i^*e_{i,j} : i \in [n],j \in [n_i] \}$ we get a partition 
\[
[L] = \bigcup_{\tau = 1}^n \Omega_\tau
\]
by setting
\[
\Omega_\tau := \{ k \in [L] : L_i^*e_{i,j} = e_\tau\}.
\]
Furthermore, note that
\[
\{L_i^*e_{i,j} : j \in [n_i]\}
\]
contains $n_i$ distinct vectors.

Let us show first that $x_1 = \cdots = x_{n_1}$ and begin by considering the case $\boldsymbol{R} = (\mathrm{id},\ldots,\mathrm{id})$. For $I \in \mathcal{I}$ to be such that $d_I(\boldsymbol{R}) > 0$, we need to take precisely one element of each $\Omega_\ell$ and put it in $I$. Hence
\[
K(\boldsymbol{R}) = \mathrm{conv}\{  \mathrm{1}_I : \#(I \cap \Omega_\tau) = 1 \,\, \text{for all $\tau \in [n]$}\}.
\]
From this we see in particular that
\begin{equation} \label{e:K}
K(\boldsymbol{R}) \subseteq \bigg\{ x \in [0,1]^L : \sum_{k \in \Omega_\tau} x_k = 1 \,\, \text{for all $\tau \in [n]$} \bigg\}.
\end{equation}
Indeed, if $I \in \mathcal{I}$ is such that $\#(I \cap \Omega_\tau) = 1$ for all $\tau \in [n]$, then
\[
\sum_{k \in \Omega_\tau} \mathrm{1}_I(k) = \sum_{k \in I \cap \Omega_\tau} 1 = 1
\]
and therefore \eqref{e:K} follows because the set on the right-hand side is convex.

Next consider the case $\boldsymbol{R}' = (\Sigma,\mathrm{id},\ldots,\mathrm{id})$, where $\Sigma \in \mathrm{O}(n_1)$ is chosen as follows.
Write
\[
\{L_1^*e_{1,j} : j \in [n_1]\} = \{e_\tau : \tau \in E\}
\]
where $E \subseteq [n]$ and $\#(E) = n_1$, and let $\iota : E \to [n_1]$ be the induced bijection; i.e. the bijection given by
\[
e_\tau = L_1^*e_{1,\iota(\tau)} \qquad (\tau \in E).
\]
Observe that this means $\iota(\tau) \in \Omega_\tau$ whenever $\tau \in E$.

Let $\sigma$ be any permutation of $[n_1]$ which has no fixed points, and associated with this permutation we let $\Sigma \in \mathrm{O}(n_1)$ be such that
\[
\Sigma(e_{1,\sigma(j)}) = e_{1,j} \qquad (j\in [n_1]). 
\]
Then of course
\[
\{L_1^*\Sigma e_{1,j} : j \in [n_1]\} = \{e_\tau : \tau \in E\}.
\]
Also, from the list of vectors 
\begin{equation} \label{e:rotatedlist}
\{ L_1^*\Sigma e_{1,j} : j \in [n_1] \} \cup \{ L_i^*e_{i,j} : i \in [n] \setminus \{1\},j \in [n_i] \}    
\end{equation}
we get a partition
\[
[L] = \bigcup_{\tau = 1}^n \Omega_\tau'
\]
by setting
\[
\Omega_\tau' := 
\left\{
\begin{array}{lll}
\Omega_\tau & \text{if $\tau \notin E$} \\
(\Omega_\tau \setminus \{\iota(\tau)\}) \cup \{\sigma(\iota(\tau))\} & \text{if $\tau \in E$.}
\end{array}
\right.
\]
Note that, by construction, if $\tau \in E$ then
\[
e_\tau = L_1^*\Sigma e_{1,\sigma(\iota(\tau))}
\]
and so $\sigma(\iota(\tau))$ plays the role of $\iota(\tau)$ above (i.e. in the first block of the list \eqref{e:rotatedlist}, $e_\tau$ appears in the $\sigma(\iota(\tau))$th position). Also, as above, we have
\begin{align*}
K(\boldsymbol{R}') & = \mathrm{conv}\{  \mathrm{1}_I : \#(I \cap \Omega_\tau') = 1  \,\, \text{for all $\tau \in [n]$}\} \\
& \subseteq \bigg\{ x \in [0,1]^L : \sum_{k \in \Omega_\tau'} x_k = 1 \,\, \text{for all $\tau \in [n]$} \bigg\}.
\end{align*}

Now take an arbitrary $x \in K(\boldsymbol{R}) \cap K(\boldsymbol{R}')$, and let $\tau \in E$. Then
\[
\sum_{k \in \Omega_\tau} x_k = 1 = \sum_{k \in \Omega_\tau'} x_k = 1
\]
and therefore
\[
x_{\iota(\tau)} = x_{\sigma(\iota(\tau))}.
\]
Since $\iota$ is a bijection, this means $x_j = x_{\sigma(j)}$ holds for all $j \in [n_1]$, and hence $x_1 =  \cdots = x_{n_1}$. By similar arguments, we can obtain $x_{n_{i-1} + 1} =  \cdots = x_{n_i}$ for each $i \in [\ell]$, and thus $x \in \mathrm{im}(T)$.
\end{proof}

\begin{remark}
In fact, \eqref{e:K} holds with equality.
To check this, first note that for any $I \subseteq [L]$, it holds that
    \begin{equation}\label{e:Identity}
    \# \{ \tau \in [n]\, :\, \Omega_\tau \cap I \neq \emptyset \} = \dim ({\rm span}(v_k)_{k \in I}), 
    \end{equation}
    where $v_k := L_i^* e_{i,j}$ for $k=(i,j) \in [n] \times [n_i]$. Indeed, for $k, k' \in I$, the vectors $v_k$ and $v_{k'}$ coincide if and only if $k,k' \in \Omega_\tau$ for some $\tau \in [n]$.
    From this we obtain \eqref{e:Identity}. 

    Now take any $x \in [0,1]^L$ with 
    \begin{equation}\label{e:Identity2}
    \sum_{k \in \Omega_\tau} x_k =1 
    \end{equation}
    for all $\tau \in [n]$. To show the opposite inclusion to \eqref{e:K} we use Theorem \ref{t:Edmonds}. For this, take arbitrary $I \subseteq [L]$ and use \eqref{e:Identity} and \eqref{e:Identity2} to get
     \begin{equation*}
        \sum_{k \in I} x_k
        =
        \sum_{\substack{\tau \in [n] \\ \Omega_\tau \cap I\neq \emptyset} } \sum_{k \in \Omega_\tau \cap I} x_k
        \le
        \sum_{\substack{\tau \in [n] \\ \Omega_\ell \cap I \neq \emptyset}} \sum_{k \in \Omega_\ell} x_k
        =
        \dim ({\rm span}(v_k)_{k \in I}).
    \end{equation*}
   In a similar way 
    $$
    \sum_{k \in [L]} x_k
        =
        \sum_{\tau \in [n]  } \sum_{k \in \Omega_\tau} x_k
        =
        n. 
    $$
    The assertion
    \begin{equation*} 
K(\boldsymbol{R}) \supseteq \bigg\{ x \in [0,1]^L : \sum_{k \in \Omega_\tau} x_k = 1 \,\, \text{for all $\tau \in [n]$} \bigg\}.
\end{equation*}
    now follows from Theorem \ref{t:Edmonds}.
    \end{remark}




\begin{thebibliography}{MMMMM}
\bibitem{AGLOW} Z. Allen-Zhu, A. Garg, Y. Li, R. Oliveira, A. Wigderson, \textit{Operator scaling via geodesically convex optimization, invariant theory and polynomial identity testing}, arXiv:1804.01076.

\bibitem{A} B. Ammari, \textit{Regularized Brascamp--Lieb inequalities via optimal transport and study of equality cases}, arXiv::2603.21267.

\bibitem{AFR} K. Astala, D. Faraco, K. Rogers, \textit{On Plancherel’s identity for a two-dimensional
scattering transform}, Nonlinearity \textbf{28} (2015), 2721--2729.

\bibitem{Ball} K. Ball, \textit{Volumes of sections of cubes and related problems}, Geometric Aspects of Functional Analysis, Springer Lecture Notes in Math. 1376 (1989) 251--260.

\bibitem{Bapat} R. B. Bapat, \textit{Mixed discriminants of positive semidefinite matrices}, Linear Algebra Appl. \textbf{126} (1989), 107--124.

\bibitem{Barthe} F. Barthe, \textit{On a reverse form of the Brascamp--Lieb inequality}, Invent. Math. \textbf{134} (1998), 335--361.

\bibitem{BGMN} F. Barthe, O. Gu\'edon, S. Mendelson, A. Naor, \textit{A probabilistic approach to the
geometry of the $\ell^p$-ball}, Ann. Probab. \textbf{33} (2005) 480--513.

\bibitem{BBBCF} J. Bennett, N. Bez, S. Buschenhenke, M. G. Cowling, T. C. Flock, \textit{On the nonlinear Brascamp--Lieb inequality}, Duke Math. J. \textbf{169} (2020), 3291--3338.

\bibitem{BBCF} J. Bennett, N. Bez, M. G. Cowling, T. C. Flock, \textit{Behaviour of the Brascamp--Lieb constant}, Bull. Lond. Math. Soc. \textbf{49} (2017), 512--518.

\bibitem{BCCT} J. Bennett, A. Carbery, M. Christ, T. Tao,  \textit{The Brascamp--Lieb inequalities: finiteness, structure and extremals}, Geom. Funct. Anal. \textbf{17} (2008), 1343--1415.

\bibitem{BCCT_MRL} J. Bennett, A. Carbery, M. Christ, T. Tao,  \textit{Finite bounds for H\"older--Brascamp--Lieb multilinear inequalities}, Math. Res. Lett. \textbf{17} (2010), 647--666.

\bibitem{BGT} N. Bez, A. Gauvan, H. Tsuji, \textit{A note on ubiquity of geometric Brascamp--Lieb data}, Bull. Lond. Math. Soc. \textbf{57} (2025), 302--314.

\bibitem{Brazitikos} S. Brazitikos, \textit{Brascamp--Lieb inequality and quantitative versions of Helly’s theorem},
Mathematika \textbf{63} (2017), 272--291.

\bibitem{BV} S. Boyd, L. Vandenberghe, \textit{Convex optimization}, Cambridge University Press, 2004.

\bibitem{B} R. Brown, \textit{Estimates for the scattering map associated with a two-dimensional first-order
system}, J. Nonlinear Sci. \textbf{11} (2001) 459--471.

\bibitem{BLNW} 
P. B\"urgisser, Y. Li, H. Nieuwboer, M. Walter,
\textit{Interior-point methods for unconstrained geometric programming and scaling problems}, arXiv:2008.12110.

\bibitem{BFGOWW} 
P. B\"urgisser, C. Franks, A. Garg, R. Oliveira, M. Walter, A. Wigderson, \textit{Towards a theory of non-commutative optimization: geodesic 1st and 2nd order methods for moment maps and polytopes}, 2019 IEEE 60th Annual Symposium on Foundations of Computer Science, 845--861.

\bibitem{CHV} A. Carbery, T. H\"anninen, S. Valdimarsson, \textit{Multilinear duality and factorisation for Brascamp--Lieb-type inequalities}, J. Eur. Math. Soc. \textbf{25} (2023), 2057--2125.

\bibitem{CC-E} E. Carlen, D. Cordero-Erausquin, \textit{Subadditivity of the entropy and its relation to
Brascamp--Lieb type inequalities}, Geom. Funct. Anal. \textbf{19} (2009), 373--405.

\bibitem{CLL} E. A. Carlen, E. H. Lieb, M. Loss \textit{A sharp analog of Young's inequality on $S^N$ and related entropy inequalities}, J. Geom. Anal. \textbf{14} (2004), 487--520.

\bibitem{C} M. Chiang, \textit{Geometric programming for communication systems}, Communications and Information Theory \textbf{2} (2005), 1--154.

\bibitem{CD1} C. Chindris, H. Derksen, \textit{The capacity of quiver representations and Brascamp--Lieb constants}, Int. Math. Res. Not. IMRN \textbf{2022} (2022), 19399--19430.

\bibitem{CD} C. Chindris, H. Derksen, \textit{Algebraicity of the Brascamp--Lieb constants}, arXiv:2603.09057.


\bibitem{Choi} M. D. Choi, \textit{Completely positive linear maps on complex matrices}, Linear Algebra Appl. \textbf{10} (1975), 285--290.



\bibitem{CL} T. Courtade, J. Liu, \textit{Euclidean forward–reverse Brascamp--Lieb Inequalities: finiteness, structure, and extremals}, J. Geom. Anal. \textbf{31} (2021), 3300--3350.


\bibitem{DPZ} R. J. Duffin, E. L. Peterson, C. Zener, \textit{Geometric programming - theory
and application}, John Wiley \& Sons, 1967.

\bibitem{Edmonds} J. Edmonds, Submodular functions, matroids, and certain polyhedra, in Combinatorial
Structures and Their Applications, Gordon and Breach, New York, 1970, pp. 69--87.

\bibitem{Finner} H. Finner, A generalization of H\"older's inequality and some probability inequalities, Ann. Probab. \textbf{20}
(1992), 1893--1901.

\bibitem{Franks} C. Franks, \textit{Operator scaling with specified marginals}, STOC'18---Proceedings of the 50th Annual ACM SIGACT Symposium on Theory of Computing, 190--203.

\bibitem{FSG} C. Franks, T. Soma, M. X. Goemans, \textit{Shrunk subspaces via operator sinkhorn iteration}, Proceedings of the 2023 Annual ACM-SIAM Symposium on Discrete Algorithms (SODA), 1655–1668. Society for Industrial and Applied Mathematics (SIAM), Philadelphia, PA, 2023.

\bibitem{GGOW_GAFA} A. Garg, L. Gurvits, R. Oliveira, A. Wigderson, \textit{Algorithmic and optimization aspects of Brascamp--Lieb inequalities, via Operator Scaling}, Geom. Funct. Anal. \textbf{28} (2018) 100--145.

\bibitem{GGOW_FOCM} A. Garg, L. Gurvits, R. Oliveira, A. Wigderson, \textit{Operator scaling: theory and applications}, Found. Comput. Math.  \textbf{20} (2020) 223--290.

\bibitem{Gressman} P. T. Gressman, \textit{Finiteness of the H\"older--Brascamp--Lieb constant revisited}, arXiv:2503.15656.

\bibitem{GZ} S. Guo, R. Zhang, \textit{On integer solutions of Parsell--Vinogradov systems}, Invent. Math. \textbf{218} (2019), 1--81.

\bibitem{Gurvits} L. Gurvits, \textit{Classical complexity and quantum entanglement}, J. Comput. System Sci. \textbf{69} (2004), 448--484.

\bibitem{GurvitsLeake} L. Gurvits, J. Leake, \textit{Counting matchings via capacity-preserving operators}, Combin. Probab. Comput. \textbf{30} (2021), 956--981.

\bibitem{GS} L. Gurvits, A. Samorodnitsky, \textit{A deterministic algorithm for approximating the mixed discriminant and mixed volume,
and a combinatorial corollary}, Discrete Comput. Geom. \textbf{27} (2002), 531--550.

\bibitem{HM} L. Hamilton, A. Moitra, \textit{The Paulsen problem made simple}, Israel J. Math. \textbf{246} (2021), 299--313.

\bibitem{HM} M. Hardt, A. Moitra, \textit{Algorithms and Hardness for Robust Subspace Recovery}, Proceedings of the 26th Annual Conference on Learning Theory, Proceedings of Machine Learning Research \textbf{30} (2013), 354--375.


\bibitem{HIOS} H. Hirai, Y. Iwamasa, T. Oki, T. Soma, \textit{Algebraic combinatorial optimization on the degree of determinants of noncommutative symbolic matrices}, Math. Program. \textbf{213} (2025), 941--984.

\bibitem{KLLR1} T. C. Kwok, L. C. Lau, Y. T. Lee and A. Ramachandran, \textit{The Paulsen problem, continuous operator scaling, and smoothed analysis}, in STOC’18-Proceedings of the 50th Annual ACM SIGACT Symposium on Theory of Computing, ACM, New York, 2018, pp. 182--189.

\bibitem{KLLR2} T. C. Kwok, L. C. Lau, Y. T. Lee and A. Ramachandran, \textit{The Paulsen problem, continuous operator scaling, and smoothed analysis}, arXiv:1710.02587.


\bibitem{KLR} T. C. Kwok, L. C. Lau, A. Ramachandran, \textit{Spectral analysis of matrix scaling and operator scaling}, SIAM J. Comput. \textbf{50} (2021), 1034--1102.



\bibitem{Lieb} E. H. Lieb, \textit{Gaussian kernels have only Gaussian maximizers}, Invent. Math. \textbf{102} (1990), 179--208.

\bibitem{P} P. Perry, \textit{Global well-posedness and long-time asymptotics for the defocussing Davey--Stewartson II equation in $H^{1,1}(\mathbb{C})$, with an appendix by M. Christ}, J. Spectral Theory \textbf{6} (2016), 429--481.

\bibitem{Petrow} I. Petrow, \textit{The Weyl law for algebraic tori}, J. Eur. Math. Soc. \textbf{26} (2024), 2441--2532.

\bibitem{R} A. Ramachandran, \textit{The Brascamp--Lieb Inequality: Matroid Matching and Rank of Matrix Spaces}, slides available at https://algcomp.uwaterloo.ca/seminar-materials/2022-10-21-akshay.pdf

\bibitem{SV} D. Straszak, N. K. Vishnoi, \emph{Maximum entropy distributions: Bit
complexity and stability}, Proceedings of the Thirty-Second Conference on Learning
Theory, volume 99 of Proceedings of Machine Learning Research, pages 2861--2891.

\bibitem{V} S. Valdimarsson, \textit{The Brascamp--Lieb polyhedron}, Canad. J. Math. \textbf{62} (2010), 870--888.

\bibitem{Vate} J. H. Vande Vate, \textit{Fractional matroid matchings} J. Combinat. Theory Ser. B \textbf{55} (1992), 133--145.

\bibitem{Zhang} R. Zhang, \emph{The Brascamp--Lieb inequality and its influence on Fourier analysis}, in The
Physics and Mathematics of Elliott Lieb, Vol. 2, 585--628.
\end{thebibliography}
\end{document}